\RequirePackage{fix-cm}
\documentclass[smallextended]{svjour3}       
\usepackage{amsfonts,graphicx}
\usepackage{sriram}
\usepackage{algorithm2e}
\usepackage{bm,mathrsfs}

\usepackage{pifont}
\newcommand{\tick}{\ding{51}}%
\newcommand{\cross}{\ding{55}}%
\usepackage{listings}
\usepackage{xcolor,colortbl}
\definecolor{Gray}{gray}{0.85}
\definecolor{LightCyan}{rgb}{0.88,1,1}
\newcolumntype{a}{>{\columncolor{Gray}}c}
\newcolumntype{b}{>{\columncolor{white}}c}

\numberwithin{equation}{section}

\begin{document}

\title{Bernstein Inequalities for  Constrained Polynomial Optimization Problems.
}

\titlerunning{Bernstein Inequalities for Constrained
  Polynomial Optimization Problems.}  

\author{Mohamed Amin Ben Sassi  \and
        Sriram Sankaranarayanan
}


\institute{Mohamed Amin Ben Sassi \at
              University of Colorado, Boulder , USA. \\
              \email{mohamed.bensassi@colorado.edu}           
           \and
            Sriram Sankaranarayanan\at
           University of Colorado, Boulder , USA. \\
              \email{srirams@Colorado.EDU@colorado.edu}  
}

\date{Received: date / Accepted: date}

\maketitle

\begin{abstract}
  In this paper, we examine linear programming (LP) relaxations based
  on Bernstein polynomials for polynomial optimization problems
  (POPs).  We present a progression of increasingly more precise LP
  relaxations based on expressing the given polynomial in its
  Bernstein form, as a linear combination of Bernstein
  polynomials. The well-known bounds on Bernstein polynomials over the
  unit box combined with linear inter-relationships between Bernstein
  polynomials help us formulate ``Bernstein inequalities'' which yield
  tighter lower bounds for POPs in bounded rectangular domains. The
  results can be easily extended to optimization over polyhedral and
  semi-algebraic domains.  We also examine techniques to increase the
  precision of these relaxations by considering higher degree
  relaxations, and a branch-and-cut scheme.  \keywords{Polynomial
    Optimization Problem \and Bernstein Polynomials \and Linear
    programming}
\end{abstract}

%
%
\section{Introduction}
In this paper, we examine linear programming relaxations for
polynomial optimization problems (POP) that seek to optimize a
multivariate polynomial $p(\vx)$ over a compact interval domain
$ \vx \in [\ell, u]$. Our approach is based on two ideas: (a) We
consider a reformulation of the problem as a linear program using
Bernstein polynomials. However, doing so also increases the number of
decision variables and constraints in the problem. (b) Next, we
present valid inequalities for improving the approximation.  These
inequalities are derived from well-known properties of Bernstein
polynomials that yield linear inter-relationships between the decision
variables of the linear program. Our approach is extended to handle
compact domains described by semi-algebraic constraints. We present a
branch-and-cut scheme that introduces the cutting plane inequalities
hand-in-hand with a decomposition of the feasible region.

The problem of optimizing polynomials over an interval is well-known
to be non-convex, and is in fact NP-hard. Nevertheless, well-known
classes such as linear, quadratic, or even integer linear programs can
be viewed as particular cases of POPs.  Also, since polynomials
provide a good approximation for non linear functions, solving POPs
efficiently is a big step toward handling more complex problems.
Finally, a lot of problems arising from disparate domains such as
biology, robotics and engineering can be formulated as POPs. Our
interest is motivated by verification and synthesis problems for
dynamical systems such as safety, reachability and stability verification.
These problems can be reduced to POPs. In
fact, the motivation of this paper comes from our previous work, where
we aim to prove stability for polynomial dynamical
systems~\cite{Bensassi+Sriram}.  Therein, Bernstein polynomials were
used as an alternative to the well-known sum of squares (SOS) approach
in order to avoid the numerical issues of semi-definite programming
(SDP)~\cite{ParilloSOS,Lasserre01globaloptimization,Shor/1987/Class}. In
this regard, the Simplex algorithm can be implemented in exact
arithmetic to yield numerically validated lower bounds to the optimal
value of the POP, thus formally establishing the stability of the
process.  The success of the approach in a large set of benchmarks
motivates us to go further, improve the results and make them known in
an optimization context.

More precisely, we show how POPs can be relaxed to linear programs
thanks to the use of Bernstein polynomials, and a well-known
reformulation-linearization technique (RLT) described by Sherali et
al~\cite{sherali91,sherali97}. In fact, the properties of Bernstein
polynomials inside the unit box offer us an elegant approach to improving
the RLT approach. We formulate these properties as linear inequalities
 to obtain guaranteed lower (upper) bounds
for our minimization (maximization) problems. This will be useful in
cases where the POP does not need to be solved exactly. In the latter
case, we combine our inequalities with a branch-and-bound
decomposition process originally described by
Nataraj et al~\cite{Nataraj2007}.

We evaluate our approach using a set of
benchmarks described in Nataraj et al~\cite{Nataraj2007} to
characterize the effect of adding the extra Bernstein inequalities to
the RLT approach. We observe that while the addition of these
inequalities improves the lower bound, it is not sufficient
for yielding tight bounds. Next, we consider the addition of
Bernstein inequalities in a ``branch-and-cut'' approach that combines
the addition of cutting planes ``on-demand'' with a branch-and-bound
decomposition of the domain. We find that all approaches eventually
yield tight bounds on the value of the global optimum. Therefore, we
compare the computational time for various approaches. Finally, we
compare the various approaches on benchmarks from our previous
work~\cite{Bensassi+Sriram} which consists of a set of polynomial
Lyapunov functions used as stability proofs for polynomial dynamical
systems. In this particular case, our goal is to show that the
functions are non-negative over a domain. We adapt the
branch-and-bound scheme for this application to evaluate its
effectiveness.

The results of our evaluation are mixed: we observe that adding
cutting plane inequalities does result in tighter lower bounds on the
optimum and therefore examining fewer cells in the branch-and-bound
approach. However, this comes at the cost of obtaining larger linear
programs due to the extra inequalities, and therefore, an overall
larger computation time. We show that the careful
consideration of inequalities to be introduced yields a ``sweet spot''
for better approximations using less computation time.

\subsection{Organization}
In Section~\ref{sec:preliminaries}, we present basic notions and
properties related to Bernstein polynomials. All the results of this
section are quite standard , therefore proofs are omitted.
Section~\ref{Sec:lp-Bernstein-relaxations} is the core of the
paper. In this section, Bernstein polynomials and their properties
inside the unit box are translated into a series of inequalities
yielding a corresponding set of LP relaxations of increasing
precision.  An iterative approach mixing these relaxations is
presented, and a criterion for checking if the given lower bound meets
the optimal value of the original problem are also given. In
Section~\ref{Sec:improvement}, we show how bounds can be made
arbitrary tighter using some techniques such as decomposition (branch-
and-bound scheme).

\subsection{Related Work}

Since solving a POP is generally NP-hard, existing work consists of
relaxing it in order to obtain an easier problem for which efficient
solvers exist. In the literature, we can distinguish two types of
relaxations. The first class is called LP relaxations. These
approaches approximate the POP using linear programs that can be
efficiently solved using an LP solver. A popular LP relaxation is the
reformulation linearization technique (RLT) given by Sherali et
al~\cite{sherali91,sherali97}. The approach was improved by Nataraj et
al~\cite{Nataraj2007} for solving POPs, wherein the use of the Bernstein basis was
proposed as an improvement.  In particular, Nataraj et al made use of
the property that Bernstein polynomial coefficients over a box form a
lower bound of the polynomial. In this work, we show that this
property is simply the optimal value of a LP formed by a series of
inequalities that relate one Bernstein polynomial to another. In doing
so, we formulate numerous valid inequalities that improve
substantially on this bound. Another recent approach called DSOS
(Diagonally-dominant Sum of Squares) was formulated by Ali Ahmadi et
al~\cite{AliAhmadi+Majumdar/2014/DSOS} by relaxing positive semi-definiteness of a matrix using the
stronger condition of \emph{diagonal
  dominance}. In fact, Ali
Ahmadi's approach can be seen as selecting a finite set of generators
from the infinitely generated cone of positive polynomials in the
polynomial ring $\reals[\vx]$.  In contrast, our approach also adds a
finite set of generators to the cone of positive polynomials over a
compact interval. Naturally, both choices of finite bases involve a
tradeoff that are optimal for certain classes of problems. In
particular, we choose the Bernstein polynomials and utilize the set of
linear inter-relationships between these. Extending our approach to
possibly cover the polynomial basis used in the DSOS approach is
currently under investigation.

As an alternative to LP relaxations, we can formulate SDP relaxations. In 2001, Lassere proposed
what was called a Linear matrix equality (LMI)
relaxation~\cite{Lasserre01globaloptimization}. The main idea is to
map the polynomial optimization problem to an optimization problem
over probability measures and then use results from moment
theory. Subsequently, Parillo introduced the SOS programming approach
that has become one of the most popular SDP
relaxations~\cite{ParilloSOS}.  Theoretically, following the
comparison made by Lasserre~\cite{LasserreSDPvsLP} between SDP (LMI)
and LP (RLT) relaxations, one concludes that the SDP approach is much
more precise at the extra (polynomial) cost of solving an SDP.  In
fact the comparison points out that for the LP (RLT) relaxation,
convergence results to the optimal value are not always guaranteed, in
contrast to SDP relaxations. Also, the comparison shows that RLT
cannot be exact whenever the global optimum belongs to the interior of
the feasible set.  We will show in this paper, that this claim does
not remain true (see Example~\ref{ex2}) when the Bernstein
inequalities suggested here are used. Furthermore, in practice, the
SDP approach suffers from numerical issues. This was pointed in our
previous work~\cite{Bensassi+Sriram} when using SOS programming for
Lyapunov function synthesis.  Other approaches like interval
methods~\cite{Moore+Others/2009/Interval} and decomposition techniques
exists. In this paper, we will focus on the related scheme given by
Nataraj et al in~\cite{Nataraj2007}, since it is fully based on the
use of Bernstein coefficients. We will build on this approach by
adding the extra Bernstein inequalities.


\section{Overview of Bernstein Polynomials}\label{sec:preliminaries}
Bernstein polynomials were first proposed by Bernstein as a
constructive proof of Weierstrass approximation
theorem~\cite{Bernstein/1912/Demonstration}, and are useful in many
engineering design applications for approximating geometric
shapes~\cite{Farouki/2012/Bernstein}. They form a basis for
approximating polynomials over a compact interval, and have nice
properties inside the unit box
(see~\cite{Munoz+Narkavicz/2013/Formalization} for more details). We
first examine Bernstein polynomials and their properties for the
univariate case, and then extend them to multivariate polynomials
(see~\cite{Bernstein1,Bernstein2}).
\begin{definition}[Univariate Bernstein Polynomials]
Given an index \\$i \in \{0,\ldots,m\}$, the $i^{th}$ univariate Bernstein
polynomial of degree $m$ over $[0,1]$ is given by the following
expression:
\begin{equation}
 \beta_{i,m}(x)=\left(
\begin{array}{c}
 m \\ i
\end{array}
\right) x^i(1-x)^{m-i}.
\end{equation}
\end{definition}
Using these polynomials, monomials can be written as follows:
\begin{equation}
\label{chap12:bernbasis}
 x^i=\displaystyle{\sum_{j=i}^{m} \frac{\left(
\begin{array}{c}
 j \\ i
\end{array}
\right)}{\left(
\begin{array}{c}
 m \\ i
\end{array}
\right)} \beta_{j,m}(x)},\text{ for all }i=0,\dots,m.
\end{equation} 

Then, in the Bernstein polynomial basis, polynomial $p(x)=\sum\limits_{j=0}^m p_jx^j$ of
degree $m$ can be written as:
$$
p(x)=\displaystyle{\sum_{i=0}^{m}  b_{i,m} \beta_{i,m}(x) }
$$         
where for all $i=0,\dots,m$:
\begin{equation}\label{eq:bernstein-coeff}
 b_{i,m}=\sum_{j=0}^i \frac{\left(
\begin{array}{c}
 i \\ j
\end{array}
\right) 
}
{\left(
\begin{array}{c}
 m \\ j
\end{array}
\right) } p_j.
\end{equation}
The coefficients $b_{i,m}$ are called the \emph{Bernstein coefficients} of the polynomial $p$.

Bernstein polynomials have many interesting properties on the unit interval $[0,1]$. We summarize the most relevant ones for our applications.
\begin{lemma}\label{lem:bernprop}
Bernstein polynomials 
have the following properties: 
\begin{enumerate}
\item Unit partition: $\displaystyle{\sum_{i=0}^{m} \beta_{i,m}(x) }=1.$
\item Bounds: $0 \le \beta_{i,m}(x) \le \beta_{i,m}(\frac{i}{m}),\; \forall i=0,\dots,m.$
\item Induction: $\beta_{i,m-1}(x)=\frac{m-i}{m}\beta_{i,m}(x)+\frac{i+1}{m}\beta_{i+1,m}(x),\; \forall i=0,\dots,m-1.$
\end{enumerate}
\end{lemma}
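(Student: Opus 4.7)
The plan is to dispatch each of the three items by short elementary arguments, all hinging on the binomial identity and basic calculus on $[0,1]$.

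For the unit partition, I would simply apply the binomial theorem to $1^m = (x + (1-x))^m$, obtaining $\sum_{i=0}^m \binom{m}{i} x^i (1-x)^{m-i}$, which is exactly $\sum_{i=0}^m \beta_{i,m}(x)$. No further work is needed.

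For the bounds, the lower bound $\beta_{i,m}(x) \geq 0$ on $[0,1]$ is immediate since each factor $x^i$, $(1-x)^{m-i}$ and $\binom{m}{i}$ is nonnegative there. For the upper bound I would differentiate:
\begin{equation*}
\frac{d}{dx} \beta_{i,m}(x) = \binom{m}{i}\, x^{i-1}(1-x)^{m-i-1}\bigl(i - m x\bigr),
\end{equation*}
valid for $1 \leq i \leq m-1$. The sign of the derivative changes from positive to negative at $x = i/m$, so this interior critical point is the unique maximizer on $[0,1]$. The edge cases $i=0$ and $i=m$ are handled separately: $\beta_{0,m}(x)=(1-x)^m$ is decreasing and maximized at $x=0 = i/m$, while $\beta_{m,m}(x)=x^m$ is increasing and maximized at $x=1 = i/m$. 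In all cases, the maximum value is $\beta_{i,m}(i/m)$.

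For the induction identity, I would expand the right-hand side using the definitions and then simplify the binomial factors. The key arithmetic step is
\begin{equation*}
\tfrac{m-i}{m}\binom{m}{i} \;=\; \binom{m-1}{i}, \qquad
\tfrac{i+1}{m}\binom{m}{i+1} \;=\; \binom{m-1}{i},
\end{equation*}
which reduces the right-hand side to $\binom{m-1}{i}\bigl[x^i(1-x)^{m-i} + x^{i+1}(1-x)^{m-i-1}\bigr]$. Factoring out $x^i(1-x)^{m-i-1}$ and using $(1-x)+x = 1$ collapses the bracket to $x^i(1-x)^{m-i-1}$, giving $\beta_{i,m-1}(x)$ as desired.

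None of the three parts presents a real obstacle; if anything, the only mild care required is in the second item to confirm that the interior critical point is indeed a maximum (as opposed to a minimum or inflection) and to treat the boundary indices $i=0, m$ separately, since in those cases the formula for the derivative above degenerates.
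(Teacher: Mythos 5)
Your three arguments are all correct: the binomial expansion of $(x+(1-x))^m$ for the unit partition, the sign analysis of $\frac{d}{dx}\beta_{i,m}(x)=\binom{m}{i}x^{i-1}(1-x)^{m-i-1}(i-mx)$ with the boundary indices $i=0,m$ treated separately for the bounds, and the binomial-coefficient identities $\frac{m-i}{m}\binom{m}{i}=\frac{i+1}{m}\binom{m}{i+1}=\binom{m-1}{i}$ for the induction relation. The paper itself gives no proof of this lemma --- it states in the organization section that all results of the preliminaries are standard and their proofs are omitted, citing the Bernstein polynomial literature --- so there is no in-paper argument to compare against; your elementary verification is complete and fills that gap correctly.
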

Using these properties, the following result holds:
\begin{corollary}\label{cor:bernbound}
On the interval $[0,1]$, the following inequality
 holds~\cite{Garloff93}:
\begin{equation}
 \displaystyle{\min_{i=0,\dots,m}b_{i,m}\le p(x) \le \max_{i=0,\dots,m}b_{i,m}}.
\end{equation}
The equality $ \displaystyle{\min_{i=0,\dots,m}b_{i,m}=\min_{x\in [0,1]} p(x)}$, respectively $ \displaystyle{\max_{i=0,\dots,m}b_{i,m}=\max_{x\in [0,1]} p(x)}$, holds iff
$ \displaystyle{\min_{i=0,\dots,m}b_{i,m}\in\{b_{0,m},b_{m,m}\}}$, respectively $ \displaystyle{\max_{i=0,\dots,m}b_{i,m}\in\{b_{0,m},b_{m,m}\}}$. This is commonly called
\emph{ the vertex condition}. 
\end{corollary}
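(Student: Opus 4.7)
The plan is to read $p(x) = \sum_{i=0}^m b_{i,m}\,\beta_{i,m}(x)$ as a convex combination of the Bernstein coefficients and then study when the extremes of this combination can actually be achieved on $[0,1]$.

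First, I would establish the two-sided bound directly from Lemma~\ref{lem:bernprop}. Property 2 gives $\beta_{i,m}(x)\ge 0$ for every $i$ and every $x\in[0,1]$, while Property 1 gives $\sum_i \beta_{i,m}(x)=1$. Consequently, for each fixed $x\in[0,1]$, the coefficients $\beta_{i,m}(x)$ form a probability distribution on the index set $\{0,\dots,m\}$, so
\[
\min_i b_{i,m} \;=\; \sum_{i=0}^m \Bigl(\min_j b_{j,m}\Bigr)\beta_{i,m}(x) \;\le\; \sum_{i=0}^m b_{i,m}\beta_{i,m}(x) \;=\; p(x),
\]
and the symmetric argument yields the upper bound. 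This is a one-line application of the unit partition and non-negativity.

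Next I would deal with the vertex condition. The key observation is evaluating the basis at the endpoints: from the defining formula $\beta_{i,m}(x)=\binom{m}{i}x^i(1-x)^{m-i}$, one has $\beta_{i,m}(0)=0$ for $i\ne 0$ and $\beta_{0,m}(0)=1$, and symmetrically $\beta_{m,m}(1)=1$ with $\beta_{i,m}(1)=0$ for $i\ne m$. Therefore $p(0)=b_{0,m}$ and $p(1)=b_{m,m}$. The ``if'' direction of the vertex condition is then immediate: if $\min_i b_{i,m}\in\{b_{0,m},b_{m,m}\}$, then this minimum is attained as a value of $p$ at $x=0$ or $x=1$, which combined with the lower bound forces $\min_i b_{i,m}=\min_{x\in[0,1]}p(x)$.

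The only genuinely delicate part is the ``only if'' direction, which is where I expect the main (minor) obstacle. Suppose $b_{0,m}>\min_i b_{i,m}$ and $b_{m,m}>\min_i b_{i,m}$; I must show the inequality is strict, i.e.\ $p(x)>\min_i b_{i,m}$ for every $x\in[0,1]$. At the endpoints this holds by the evaluation $p(0)=b_{0,m}$ and $p(1)=b_{m,m}$. For $x\in(0,1)$, every basis element $\beta_{i,m}(x)$ is strictly positive, so the representation of $p(x)$ is a strict convex combination of the coefficients $b_{i,m}$; since at least one of these coefficients (e.g.\ $b_{0,m}$) strictly exceeds $\min_i b_{i,m}$, the combination strictly exceeds $\min_i b_{i,m}$ as well. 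Hence $\min_{x\in[0,1]}p(x)>\min_i b_{i,m}$, contradicting equality. The argument for the maximum is entirely analogous, completing the proof.
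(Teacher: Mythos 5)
Your proof is correct, and it is exactly the standard argument that the paper itself omits (Section~\ref{sec:preliminaries} states that proofs of these classical facts are left out and defers to the cited reference): the two-sided bound follows from reading $p(x)=\sum_i b_{i,m}\beta_{i,m}(x)$ as a convex combination via the unit-partition and non-negativity properties of Lemma~\ref{lem:bernprop}, and the vertex condition follows from the endpoint interpolation $p(0)=b_{0,m}$, $p(1)=b_{m,m}$ together with strict positivity of every $\beta_{i,m}$ on $(0,1)$. No gaps; the only part requiring care, the ``only if'' direction, is handled correctly by the strict-convex-combination argument.
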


We generalize the previous notions to the case of multivariate
polynomials i.e $p(x)=p(x_1,\dots,x_n)$ where $x=(x_1,\dots,x_n)\in U=[0,1]^n$. 
For \emph{multi indices},
$I=(i_1,\dots,i_n)\in \mathbb{N}^n$,
$J=(j_1,\dots,j_n)\in \mathbb{N}^n$, we will use the following notation
throughout this paper:
\begin{itemize}
\item $I+J=(i_1+j_1,\dots,i_n+j_n).$
\item ${x}^I={x_1}^{i_1}\times {x_2}^{i_2} \dots \times {x_n}^{i_n}$. 
\item $I \le J \; \iff \; i_l \le j_l,$  for all  $l=1,\dots n.$ 
\item $\frac{I}{J}=\left(\frac{i_1}{j_1},\dots,\frac{i_n}{j_n}\right)$ and 
 $\left(
\begin{array}{c}
 I \\ J
\end{array}
\right)=\left(
\begin{array}{c}
 i_1 \\ j_1
\end{array}
\right)\dots \left(
\begin{array}{c}
 i_n \\ j_n
\end{array}
\right).$
\item $I_{r,k}=(i_1,\dots,i_{r-1},i_r+k,i_{r+1},\dots,i_n)$ where $r\in\{1,\dots,n\}$ and $k\in \mathbb{Z}$.
\end{itemize}
Let us fix our maximal degree $\delta=(\delta_1,\dots,\delta_n)\in \mathbb{N}^n$ for a multivariate polynomial $p$ ($\delta_l$ is the maximal degree of $x_l$ for all $l=1,\dots,n$).
Then the multivariate polynomial $p$ can be written as:
$$
p(x) = \sum_{I\le \delta} p_I x^I \text{ where } p_I \in \mathbb{R},\; \forall I\le \delta.
$$
Multivariate Bernstein polynomials are given by products of the univariate polynomials:
\begin{equation}
 B_{I,\delta}(x) = \beta_{i_1,\delta_1}(x_1) \dots \beta_{i_n,\delta_n}(x_n) \text{ where } 
\beta_{i_j,\delta_j}(x_j)=
\left(
\begin{array}{c}
 \delta_j \\ i_j
\end{array}
\right) 
x_j^{i_j} (1-x_j)^{\delta_j-i_j}.
\end{equation}
Thanks to the previous notations, these polynomials can also be written as: 
\begin{equation}
 B_{I,\delta}(x) =\left(
\begin{array}{c}
 \delta \\ I
\end{array}
\right) 
x^{I} (1_n-x)^{\delta-I}.
\end{equation}
The expression of monomials using these polynomials is:
\begin{equation}\label{monom-expr}
{x}^I=\sum_{I\le J\le \delta} \frac{\left(
\begin{array}{c}
 J \\ I
\end{array}
\right)}{\left(
\begin{array}{c}
 \delta \\ I
\end{array}
\right)}B_{J,\delta}(x),
\mbox{ for all } I\le \delta
\end{equation}

Now, we can give the general expression of a multivariate polynomial in the Bernstein basis:
$$
p(x)=\displaystyle{\sum_{I\le\delta }  b_{I,\delta} B_{I,\delta}(x) },
$$         
where Bernstein coefficients $(b_{I,\delta})_{I\le\delta}$ are given as follows:
\begin{equation}\label{eq:bernstein-coeff-formula}
b_{I,\delta}=\sum_{J\le I} \frac{\left(
\begin{array}{c}
 I \\ J
\end{array}
\right)}{\left(
\begin{array}{c}
 \delta \\ J
\end{array}
\right)}p_J.
\end{equation}
Therefore, the generalization of Lemma~\ref{lem:bernprop} will lead to the following properties:
\begin{lemma}
\label{lem:bernprop1}
For all $x=(x_1,\dots,x_n)\in U$ we have the following properties:
\begin{enumerate}
\item Unit partition: $\displaystyle{\sum_{I\le \delta} B_{I,\delta}(x) }=1.$
\item Bounds: $0 \le B_{I,\delta}(x) \le B_{I,\delta}(\frac{I}{\delta}), \text{ for all } I\le \delta.$
\item Induction: $B_{I,\delta_{r,-1}}=\frac{\delta_r-i_r}{\delta_r}B_{I,\delta}+\frac{i_r+1}{\delta_r}B_{I_{r,1},\delta},\; \forall r=1,\dots,n, \; I\le \delta_{r,-1}.$
\end{enumerate}
\end{lemma}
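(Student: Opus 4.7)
The plan is to reduce each of the three claims to the corresponding univariate statement in Lemma~\ref{lem:bernprop}, exploiting the product structure
\[
B_{I,\delta}(x) \;=\; \prod_{j=1}^n \beta_{i_j,\delta_j}(x_j)
\]
that defines the multivariate Bernstein polynomials. Since every factor depends on a different coordinate and the product over disjoint index sets decouples into a product of sums, all three properties follow in essentially one line each.

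First, for the unit partition, I would interchange the iterated sum with the product and apply the univariate unit partition to each factor:
\[
\sum_{I \le \delta} B_{I,\delta}(x) \;=\; \sum_{i_1=0}^{\delta_1}\!\cdots\!\sum_{i_n=0}^{\delta_n} \prod_{j=1}^n \beta_{i_j,\delta_j}(x_j) \;=\; \prod_{j=1}^n \Bigl(\sum_{i_j=0}^{\delta_j} \beta_{i_j,\delta_j}(x_j)\Bigr) \;=\; 1.
\]
For the bounds, note that each factor $\beta_{i_j,\delta_j}(x_j)$ is nonnegative on $[0,1]$ and bounded above by $\beta_{i_j,\delta_j}(i_j/\delta_j)$ by the univariate bound in Lemma~\ref{lem:bernprop}. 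Taking products over $j=1,\dots,n$ preserves both inequalities and yields
\[
0 \;\le\; B_{I,\delta}(x) \;\le\; \prod_{j=1}^n \beta_{i_j,\delta_j}\!\left(\tfrac{i_j}{\delta_j}\right) \;=\; B_{I,\delta}\!\left(\tfrac{I}{\delta}\right).
\]

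For the induction property, the key observation is that $\delta_{r,-1}$ differs from $\delta$ only in the $r$-th component, so
\[
B_{I,\delta_{r,-1}}(x) \;=\; \Bigl(\prod_{j \neq r} \beta_{i_j,\delta_j}(x_j)\Bigr)\, \beta_{i_r,\delta_r-1}(x_r).
\]
Applying the univariate induction identity to $\beta_{i_r,\delta_r-1}(x_r)$ and distributing the common prefactor yields two terms whose prefactors are $\frac{\delta_r-i_r}{\delta_r}$ and $\frac{i_r+1}{\delta_r}$; re-packing the products recovers $B_{I,\delta}(x)$ and $B_{I_{r,1},\delta}(x)$ respectively, which is exactly the claimed identity.

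There is no real obstacle here: the multivariate statements are genuine tensor-product consequences of the univariate lemma, and the only thing to watch is the bookkeeping in the multi-index notation, in particular that $I_{r,1}$ denotes the index with only the $r$-th coordinate incremented, which matches the factor produced by the univariate induction applied in the $r$-th coordinate.
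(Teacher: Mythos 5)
Your proof is correct and is exactly the standard tensor-product reduction to the univariate Lemma~\ref{lem:bernprop} that the paper has in mind; the paper itself omits the proof, declaring the results of this section standard. All three steps check out, including the point that the hypothesis $I \le \delta_{r,-1}$ guarantees $i_r \le \delta_r - 1$ so the univariate induction identity applies in the $r$-th coordinate, and that the nonnegativity of each factor justifies multiplying the bounds.
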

Also, Corollary~\ref{cor:bernbound} can be generalized as follows:
\begin{corollary}~\label{cor:bernbound1}
Let $p$ be a multivariate polynomial of degree $\delta$ over the unit box $U=[0,1]^n$ with Bernstein coefficients $b_{I,\delta}$ where $I\le \delta$. 
 Then, for all $\vx \in U$, the following inequality holds:
\begin{equation}
 \displaystyle{\min_{I \le \delta}b_{I,\delta}\le p(x) \le \max_{I\le \delta}b_{I,\delta}}.
\end{equation}

{\bf The vertex condition} holds iff the minimum value (respectively the maximum value) is reached for an index $I^* \in S_0$ where:
$$S_0=\{I=(i_1,\dots,i_n)\in \mathbb{N}^n, \mbox{ such that } i_j\in\{0,\delta_j\},\; \forall j=1,\dots,n\}.$$ 
\end{corollary}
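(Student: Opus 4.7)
The plan is to mirror the univariate proof of Corollary~\ref{cor:bernbound}, using the three multivariate properties assembled in Lemma~\ref{lem:bernprop1}. The sandwich inequality follows in one line from the Bernstein expansion $p(x)=\sum_{I\le\delta}b_{I,\delta}B_{I,\delta}(x)$: by non-negativity of $B_{I,\delta}$ on $U$ together with the unit-partition identity $\sum_{I\le\delta}B_{I,\delta}(x)=1$, the value $p(x)$ is a convex combination of the coefficients $b_{I,\delta}$ and is therefore trapped between $\min_I b_{I,\delta}$ and $\max_I b_{I,\delta}$ for every $x\in U$.

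The real work is the vertex condition. The key evaluation I would do first is at a vertex $v=(v_1,\ldots,v_n)\in\{0,1\}^n$: since $\beta_{i_j,\delta_j}(0)$ is $1$ when $i_j=0$ and $0$ otherwise, and $\beta_{i_j,\delta_j}(1)$ is $1$ when $i_j=\delta_j$ and $0$ otherwise, the product $B_{I,\delta}(v)=\prod_j\beta_{i_j,\delta_j}(v_j)$ is non-zero only for the unique index $I^\star(v)\in S_0$ matching $v$, and in that case it equals $1$. Consequently $p(v)=b_{I^\star(v),\delta}$, and therefore $\min_{v\in\{0,1\}^n}p(v)=\min_{I^\star\in S_0}b_{I^\star,\delta}$.

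With this identity in hand, I would conclude by chaining three inequalities:
\[
\min_{I\le\delta}b_{I,\delta}\ \le\ \min_{x\in U}p(x)\ \le\ \min_{v\in\{0,1\}^n}p(v)\ =\ \min_{I^\star\in S_0}b_{I^\star,\delta},
\]
where the first is the sandwich bound, the second is monotonicity of the minimum over the subset $\{0,1\}^n\subset U$, and the equality is the vertex computation above. Since the outer bounds differ only by whether the minimum is restricted to $S_0$, collapse of the chain (i.e.\ equality of its endpoints) is exactly the statement that $\min_I b_{I,\delta}$ is attained at some $I^\star\in S_0$. This yields the desired ``iff'' for the minimum, and the maximum case is completely symmetric (replace $\min$ by $\max$ and use $p(v)\le\max_I b_{I,\delta}$ on vertices).

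I do not anticipate any real obstacle; the one spot that can feel fiddly is the vertex evaluation of $B_{I,\delta}(v)$, because it requires juggling the multi-index conventions introduced just before the statement. Once that bookkeeping is done cleanly, the sandwich argument and the three-term chain give both claims without needing to reason about where $p$ attains its optimum in the interior.
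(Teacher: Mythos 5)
The paper omits a proof of this corollary (it declares the results of Section~\ref{sec:preliminaries} standard), so there is nothing to compare against; judging your argument on its own terms, the sandwich inequality and the vertex evaluation $p(v)=b_{I^\star(v),\delta}$ for $v\in\{0,1\}^n$ are both correct, and your chain does establish the \emph{sufficiency} direction: if $\min_{I\le\delta}b_{I,\delta}$ is attained at some $I^\star\in S_0$, the two endpoints of the chain coincide, everything in between collapses, and the bound is exact.

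The gap is in the \emph{necessity} direction. The vertex condition asserts equality of the \emph{first two} terms of your chain, $\min_{I\le\delta}b_{I,\delta}=\min_{x\in U}p(x)$; it does not assert equality of the endpoints. From $\min_I b_{I,\delta}=\min_x p(x)\le\min_{v}p(v)=\min_{S_0}b_{I,\delta}$ you only recover $\min_I b_{I,\delta}\le\min_{S_0}b_{I,\delta}$, which holds trivially since $S_0$ is a subset of the index set, so the chain alone cannot force the minimizing index into $S_0$: a priori the minimum of $p$ could be attained at a non-vertex point with value strictly below $\min_v p(v)$. The missing ingredient is a complementary-slackness argument. Let $x^\star$ attain $\min_{x\in U}p(x)=m:=\min_I b_{I,\delta}$. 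Then
\begin{equation*}
0=p(x^\star)-m=\sum_{I\le\delta}\bigl(b_{I,\delta}-m\bigr)B_{I,\delta}(x^\star),
\end{equation*}
a sum of nonnegative terms, so $b_{I,\delta}=m$ for every $I$ with $B_{I,\delta}(x^\star)>0$. For each coordinate $j$, at least one of $\beta_{0,\delta_j}(x_j^\star)=(1-x_j^\star)^{\delta_j}$ and $\beta_{\delta_j,\delta_j}(x_j^\star)=(x_j^\star)^{\delta_j}$ is strictly positive; choosing $i_j^\star\in\{0,\delta_j\}$ accordingly produces an index $I^\star\in S_0$ with $B_{I^\star,\delta}(x^\star)>0$, hence $b_{I^\star,\delta}=m$. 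This closes the iff; the maximum case is symmetric as you say.
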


Given the Bernstein coefficients $(b_{I,\delta})_{I \leq \delta}$ for a polynomial $p$,
the  vertex condition is quite easy to check using the steps outlined below:
\begin{enumerate}
\item Find $I^* := \argmin_{I \leq \delta} ( b_{I,\delta})$.
\item Check for each $j \in [1,n]$ if $I^*_j = 0$ or $I_j^*= \delta_j$. 
\item If the previous step succeeds, vertex condition holds and
  $b_{I^*,\delta}$ is a global minimum of $p$ inside the unit
  box. Otherwise, vertex condition fails.
\end{enumerate}
Checking the vertex condition will be an important primitive for the
overall approach that will be developed in this paper.  

Finally, consider an arbitrary, bounded interval
$\mathcal{K}:\ [\underline{x_1},\overline{x_1}]\times \dots \times
[\underline{x_n},\overline{x_n}]$,
wherein $-\infty < \underline{x_j} < \overline{x_j} < \infty $, for
all $j=1,\dots,n$. It suffices to a map $\mathcal{K}$ into the unit
box $U$ by applying the following change of variables from $x$ to $z$:
$z_j= \frac{ x - \underline{x_j}}{\overline{x_j}-\underline{x_j}}$ for
all $j=1,\dots,n$. Doing so, the results from
Lemma~\ref{lem:bernprop1} can be transferred to arbitrary boxes
$\mathcal{K}$.

\section{Bernstein Polynomial Relaxations for Polynomial Optimization Problems}\label{Sec:lp-Bernstein-relaxations}
Given a multivariate polynomial $p$ and a rectangle $\mathcal{K}$, we
consider the following optimization problem :
\begin{equation}\label{eq:POP}
\begin{array}{ll}
\text{minimize} & p(x)\\
\text{s.t} & x \in \mathcal{K} .
\end{array}
\end{equation}
Whereas~\eqref{eq:POP} is hard to solve, we will construct a linear
programming (LP) relaxation, whose optimal value is guaranteed to be a
lower bound on $p^*$.  In this section, we will use Bernstein
polynomials for the unit box ($\mathcal{K}=[0,1]^n$).  If
$\mathcal{K}$ is a general rectangle, we use an affine transformation
to transform $p$ and $\mathcal{K}$ back to the unit box.

\subsection{Reformulation Linearization Technique (RLT)} 

We first recall a simple approach to relaxing polynomial optimization
problems to linear programs, originally proposed by Sherali et
al.~\cite{sherali91,sherali97}. We then carry out these relaxations
for Bernstein polynomials, and show how the properties in
Lemma~\ref{lem:bernprop1} can be incorporated into the relaxation
schemes. Recall, once again, the optimization problem~\eqref{eq:POP}
over the unit box $\mathcal{K}$:
\[\begin{array}{lll}
p^*=&\text{minimize} & p(x)\\
&\text{s.t} & x \in \mathcal{K} =: [0,1]^n, 
  \end{array}\]
where $\mathcal{K}$ is represented by the constraints $\mathcal{K}:\ \bigwedge\limits_{j=1}^n  x_j
\geq 0\ \land\ (1- x_j) \geq 0 $.  The standard RLT approach consists
of writing $p(x) = \sum_{I} p_I x^I$ as a linear form
$p(x):\ \sum_{I} p_I y_I$ for \emph{fresh variables} $y_I$ that are
place holders for the monomials $x^I$. Next, we write down as many
\emph{facts} about $x^I$ over $\mathcal{K}$ as possible. The basic approach
now considers all possible power products up to a maximal degree $D$ i.e of the form
$\pi_{J,\delta}:\ x^J (1 - x)^{\delta-J}$ for all $J \leq \delta$ where $|\delta|=D$.
 Clearly if $x \in \mathcal{K}$ then 
$\pi_{J,\delta}(x) \geq 0$. Expanding
$\pi_{J,\delta}$ in the monomial basis as  $\pi_{J,\delta}:\ \sum_{I\leq \delta} a_{I,J} x^I$, we write the
linear inequality constraint
\[ \sum_{I \leq J} a_{I,J}\ y_I \geq 0. \]
The overall LP relaxation is obtained as
\begin{equation}\label{eq:rlt-relax-pop}
\begin{aligned}
\text{minimize}& \sum_{I} p_I y_I \\
\text{s.t.} & \sum_{I \leq J} a_{I,J}\ y_I \geq 0,\ \mbox{for each}\ J \leq \delta. \\
\end{aligned}
\end{equation}
Additionally, it is possible to augment this LP by adding inequalities
of the form $\ell_I \leq y_I \leq u_I$ through the interval evaluation
of $\vx^I$ over the set $\mathcal{K}$.

\begin{remark}
  The extra ``facts'' that form the constraints in
  Eq.~\ref{eq:rlt-relax-pop} are akin to valid inequalities or cuts
  that incrementally refine an over-approximation of the feasible
  region.  Unfortunately, the number of such inequalities is
  exponential in $|\delta|$. Rather than adding these \emph{all at
    once} to yield a single LP, we may add them \emph{on demand},
  iteratively solving a series of LPs wherein the new inequalities are
  introduced as cutting planes to help improve the solution. 
\end{remark}

\begin{proposition}
For any polynomial $p$, the optimal value computed by the LP~\eqref{eq:rlt-relax-pop} is 
a lower bound to that of the polynomial program~\eqref{eq:POP}.
\end{proposition}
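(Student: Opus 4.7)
The plan is to exhibit a specific LP-feasible point whose objective value equals the POP optimum $p^*$, which immediately forces the LP minimum to lie at or below $p^*$. Since $\mathcal{K}=[0,1]^n$ is compact and $p$ is continuous, an optimizer $x^\ast\in\mathcal{K}$ exists with $p(x^\ast)=p^\ast$. Define the lifted point $\hat y_I := (x^\ast)^I$ for every multi-index $I\le \delta$.

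The first step is to verify LP feasibility of $\hat y$. By construction of the RLT constraints, for each $J\le\delta$ the linear form $\sum_{I} a_{I,J}\, y_I$ is precisely the expansion of $\pi_{J,\delta}(x)=x^J(1-x)^{\delta-J}$ in the monomial basis, with the fresh variables $y_I$ substituted for the monomials $x^I$. Substituting $\hat y_I=(x^\ast)^I$ therefore collapses the linear combination back to the polynomial evaluation
\begin{equation*}
\sum_{I} a_{I,J}\,\hat y_I \;=\; \pi_{J,\delta}(x^\ast) \;=\; (x^\ast)^J\bigl(1_n-x^\ast\bigr)^{\delta-J}.
\end{equation*}
Because each coordinate of $x^\ast$ lies in $[0,1]$, both factors are componentwise nonnegative, so $\pi_{J,\delta}(x^\ast)\ge 0$ and the corresponding LP inequality is satisfied. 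This holds for every $J\le\delta$, so $\hat y$ is feasible.

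The second step is to evaluate the LP objective at $\hat y$:
\begin{equation*}
\sum_{I} p_I\,\hat y_I \;=\; \sum_{I} p_I\,(x^\ast)^I \;=\; p(x^\ast) \;=\; p^\ast.
\end{equation*}
Since \eqref{eq:rlt-relax-pop} is a minimization and $\hat y$ is feasible with objective value $p^\ast$, the LP optimum is at most $p^\ast$, which is exactly the assertion that it is a lower bound on the POP optimum.

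There is essentially no hard step here: the argument is the standard ``lifting'' direction of RLT, and the only thing to be careful about is matching the indexing of $a_{I,J}$ with the correct support of $\pi_{J,\delta}$ so that the substitution $y_I\mapsto (x^\ast)^I$ really recovers $\pi_{J,\delta}(x^\ast)$. If additional bound inequalities $\ell_I\le y_I\le u_I$ are appended to the LP via interval evaluation of $x^I$ on $\mathcal{K}$, feasibility of $\hat y$ is preserved automatically since $\ell_I$ and $u_I$ are by definition valid bounds on $(x^\ast)^I$ for $x^\ast\in\mathcal{K}$.
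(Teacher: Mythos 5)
Your proof is correct: the paper states this proposition without proof, but the lifting argument you give (evaluate the fresh variables at an optimizer $x^\ast$, check feasibility via nonnegativity of $\pi_{J,\delta}(x^\ast)$ on $[0,1]^n$, and conclude by minimization) is exactly the argument the paper uses for the analogous Proposition on the Bernstein relaxations, where $z_{I,\delta}=B_{I,\delta}(y)$ is substituted. Your remark about the indexing of $a_{I,J}$ is well taken, since the paper's own constraint $\sum_{I\le J}a_{I,J}y_I\ge 0$ is inconsistent with its expansion $\pi_{J,\delta}=\sum_{I\le\delta}a_{I,J}x^I$ and the sum should run over the full support of $\pi_{J,\delta}$.
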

\begin{example}
We wish to solve the following POP (or find a lower bound for its solution):
\begin{equation}\label{ex:rlt}
\begin{array}{ll}
\text{minimize}&  {x_1}^2+{x_2}^2  \\
\text{s.t.} & (x_1,x_2)\in[0,1]^2 \\
\end{array}
\end{equation}
Using the RLT technique for a degree $D=2$ we denote by $y_{i,j}$ the fresh variables 
replacing the non linear terms $x^{(i,j)}={x_1}^i{x_2}^j$ for all $(i,j)\in \mathbb{N}^2$ such that $i+j \le2$.
We obtain an LP which is shown, in part, below:
\[ \begin{array}{lccc}
\text{minimize} & y_{2,0} + y_{0,2} \\
\text{s.t}&\\
&0  \leq\ y_{2,0}\ \leq & 1 \\
&0  \leq\ y_{0,2}\ \leq & 1 \\
& \cdots & \\
\end{array}\]
The optimal solution obtained from the LP is $0$, which coincides with the optimum of the original problem.
\end{example}

\subsection{RLT using Bernstein Polynomials}

The success of the RLT approach depends heavily on writing ``facts''
involving the variables $y_I$ that substitute for $x^I$. We now
present the core idea of using Bernstein polynomial expansions and the
richer bounds that are known for these polynomials from
Lemma~\ref{lem:bernprop1} to improve upon the basic RLT approach.
\paragraph{Linear relaxations :}
First, we write $p(x)$ as a weighted sum of Bernstein polynomials of degree $\delta$.
\[ p(x): \sum_{I \leq \delta} b_{I,\delta} B_{I,\delta} \,, \]
wherein $b_{I,\delta}$ are calculated using the formula in
equation~\eqref{eq:bernstein-coeff-formula}.  Let us introduce a fresh
variable $z_{I,\delta}$ as a place holder for $B_{I,\delta}(x)$.
Lemma~\ref{lem:bernprop1} now gives us a set of linear inequalities
that hold between these variables $z_{I,\delta}$. 
We formulate three LP relaxations, each providing a better
approximation for the feasible region of the original
problem~\eqref{eq:POP}.

\begin{equation}
\label{eq:lpbern0}
\begin{array}{rllr}
p_{\delta}^{(0)} = & \text{minimize} & \displaystyle{\sum_{I \le \delta} b_{I,\delta} z_{I,\delta}}\\
& \text{s.t.} & z_{I,\delta} \ge 0 & I \le \delta,\\
&& \displaystyle{\sum_{I \le \delta} z_{I,\delta} =1}\\
&& z_{I,\delta}\in \mathbb{R},\; & I\le \delta. \\
\end{array}
\end{equation}

\begin{remark}
  It is easy to see that
  $p_{\delta}^{(0)}=\displaystyle{\min_{I\le \delta} b_{I,\delta}}$
  (the smallest Bernstein coefficient). As a result, it can be
  computed quite efficiently without actually invoking an LP solver.
  In fact, the branch-and-bound approach of Nataraj~\cite{Nataraj2007}
  is based on this relaxation.
\end{remark}

Using the upper bound on Bernstein polynomials from
Lemma~\ref{lem:bernprop1}, we can strengthen~\eqref{eq:lpbern0}
further, as follows:
\begin{equation}
\label{eq:lpbern1}
\begin{array}{rllrl}
{p_\delta}^{(1)}=&\text{minimize} & \displaystyle{\sum_{I\le \delta} {b}_{I,\delta} z_{I,\delta} }\\
&\text{s.t} &   z_{I,\delta} \ge 0,  & I\le \delta, \\
&   & z_{I,\delta} \le B_{I,\delta}\left( \frac{I}{\delta} \right) & I \le \delta, & \leftarrow\ \mbox{Upper Bounds}\\
&& \displaystyle{\sum_{I\le \delta} z_{I,\delta} =1} \\
&&  z_{I,\delta}\in \mathbb{R},\; & I\le \delta. \\
\end{array}
\end{equation}

Next, tighter relaxation can be obtained by adding the induction
relations between Bernstein polynomials of lower degrees.  More
precisely, using in addition the third property of
Lemma~\ref{lem:bernprop1}, we obtain the following linear program:

\begin{equation}
\label{eq:lpbern3}
\begin{array}{rllr}
{p_\delta}^{(2)}=&\text{minimize} & \displaystyle{\sum_{I\le \delta} b_{I,\delta} z_{I,\delta} }\\
&\text{s.t} & z_{I,K}\in \mathbb{R},\;  I\le K, \; K\le \delta,\\
&&  0 \le z_{I,K} \le B_{I,K}(\frac{I}{K}), \; I\le K,\; K\le \delta \\
&& \displaystyle{\sum_{I\le K} z_{I,K}  =1}, \; K\le \delta \\
&& z_{I,K_{r,-1}}=\frac{{K}_r-i_r}{{K_r}}z_{I,{K}}+\frac{i_r+1}{{K}_{r}}z_{I_{r,1},K},\; \forall r\in \{1,\dots,n\} \mbox{ s.t } {I}_{r,1}\le K. 
\end{array}
\end{equation}

\begin{remark}
Note that Eq.~\eqref{eq:lpbern1} involved variables $z_{I,\delta}$ for
$I \leq \delta$. The formulation in Eq.~\eqref{eq:lpbern3} involves a
larger set of ``lower degree'' terms of the form $z_{I,K}$ wherein
$I \leq K$ and $K \leq \delta$. These terms are, in fact, not
necessary as demonstrated in Prop.~\ref{prop:elim-terms}.
\end{remark}

Each of these relaxations provides a lower bound on the original polynomial optimization problem. 
\begin{proposition}\label{prop:lpbern-relation}
${p_{\delta}}^{(0)} \le {p_{\delta}}^{(1)} \le {p_{\delta}}^{(2)} \le p^*$ .
\end{proposition}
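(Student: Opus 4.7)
The plan is to prove the three inequalities separately, moving from left to right, using the nested structure of the three LPs and a single ``canonical'' feasible evaluation for the last step.

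For $p_\delta^{(0)} \le p_\delta^{(1)}$, I would simply observe that \eqref{eq:lpbern1} is obtained from \eqref{eq:lpbern0} by adding the upper bound constraints $z_{I,\delta} \le B_{I,\delta}(I/\delta)$ while keeping the same variables and objective. Thus every feasible point of \eqref{eq:lpbern1} is feasible for \eqref{eq:lpbern0}, and the minimum can only increase. This step is immediate.

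For $p_\delta^{(1)} \le p_\delta^{(2)}$, the LP in \eqref{eq:lpbern3} introduces additional ``lower degree'' variables $z_{I,K}$ with $K < \delta$, together with induction and unit-partition constraints coupling them. I would show that the projection of the feasible set of \eqref{eq:lpbern3} onto the coordinates $\{z_{I,\delta} : I \le \delta\}$ lies inside the feasible set of \eqref{eq:lpbern1}: the nonnegativity, upper bounds, and unit partition for $K = \delta$ appear verbatim in \eqref{eq:lpbern3}. Since the objective in both LPs depends only on $z_{I,\delta}$, any optimal point of \eqref{eq:lpbern3} gives a feasible point of \eqref{eq:lpbern1} with identical objective, hence $p_\delta^{(1)} \le p_\delta^{(2)}$.

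For $p_\delta^{(2)} \le p^*$, the key idea is to exhibit, for every $x^\star \in [0,1]^n$, a feasible solution of \eqref{eq:lpbern3} whose objective equals $p(x^\star)$. I would set
\[
 z_{I,K} := B_{I,K}(x^\star), \qquad \text{for all } I \le K \le \delta.
\]
Feasibility is then a direct check against each constraint of \eqref{eq:lpbern3} using Lemma~\ref{lem:bernprop1}: the nonnegativity and upper bounds come from item (2), the unit-partition equalities $\sum_{I\le K} z_{I,K} = 1$ from item (1) applied at each $K\le \delta$, and the degree-reduction equalities from item (3) (Induction). Finally, by the Bernstein expansion of $p$ at degree $\delta$, the objective evaluates to $\sum_{I\le \delta} b_{I,\delta} B_{I,\delta}(x^\star) = p(x^\star)$. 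Specializing to an optimizer $x^\star$ of \eqref{eq:POP} yields a feasible LP solution of value $p^\star$, so $p_\delta^{(2)} \le p^\star$.

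The first two inequalities are essentially bookkeeping about which constraints are present; the substantive step is the third one, and within it the main obstacle is just confirming that the induction relation of Lemma~\ref{lem:bernprop1} transfers cleanly to the specific affine form used in \eqref{eq:lpbern3} for every admissible index $(I,K,r)$. Once that is verified index by index, chaining the three inequalities gives the proposition.
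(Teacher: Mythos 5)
Your proposal is correct and follows essentially the same route as the paper: monotonicity of the nested LPs for the first two inequalities, and the canonical feasible point $z_{I,K}=B_{I,K}(x^\star)$ (justified by Lemma~\ref{lem:bernprop1}) for the last. If anything, you are slightly more careful than the paper in explicitly assigning values to the lower-degree variables $z_{I,K}$, $K<\delta$, and in checking each constraint of~\eqref{eq:lpbern3} against the corresponding item of the lemma.
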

\begin{proof}
We already know thanks to Corollary~\ref{cor:bernbound} that ${p_{\delta}}^{(0)} \le  p^*$.

Now, consider any feasible solution $y$ to the problem~\eqref{eq:POP} which is equivalent to
 $$
 \begin{array}{llr}
 \text{minimize} & \displaystyle{\sum_{I\le \delta} b_{I,\delta} B_{I,\delta}(y)} \\
 \text{s.t} & y\in [0,1]^n. \\
 \end{array}
 $$

 We note that replacing $z_{I,\delta} = B_{I,\delta}(y)$ the vector of
 all $z_{I,\delta}$ form a feasible solution to each of the two
 relaxations. Therefore, $p_{\delta}^{(j)} \le p^*$ for all
 $j \in \{1,2\}$. Also it is easy to see that these relaxations are
 increasing (since they are constructed by adding extra constraints).
 Therefore, ${p_{\delta}}^{(0)}\le {p_{\delta}}^{(1)} \le
 {p_{\delta}}^{(2)}$ . \qed
\end{proof}

\begin{example}\label{ex2}
  Let's consider $p(x)=x^2$ on $[-1,1]$. For a degree $\delta=2$, the
  minimum of Bernstein coefficient is ${p_{\delta}}^{(0)}=-1$. Whereas
  using (\ref{eq:lpbern1}), we found ${p_{\delta}}^{(1)}=0$ which
  coincides with the optimum.

  Now, we consider the polynomial $p(x,y)=x^2+y^2$ on $[-1,1]^2$, we
  found ${p_{\delta}}^{(0)}=-2$ and ${p_{\delta}}^{(1)}=-0.5$.  Using
  (\ref{eq:lpbern3}), we obtain ${p_{\delta}}^{(2)}=0$ which is the
  exact optimal value.
\end{example}
Now, in order to simplify relaxation~\eqref{eq:lpbern3}, we formulate
an equivalent relaxation that only uses decision variables
$z_{I,\delta}$.  This is achieved by replacing lower degree variables
($z_{I,K}$ where $K\le\delta$) by a matrix product involving variables
$z_{I,\delta}$ . More precisely, we have the following result :
\begin{proposition}\label{prop:elim-terms}
There exist a matrix $A_{\delta}$ and a vector $c_\delta$ such that the LP formulation
in Eq.~\eqref{eq:lpbern3} can be written as
\begin{equation}
\label{eq:newlpbern}
\begin{array}{rllr}
{p_\delta}^{(2)}=&\text{minimize} & b_{\delta} \cdot z_{\delta} \\
&\text{s.t} &  0_{\delta} \le z_{\delta}\le u_{\delta}, \\
&&1_{\delta}\cdot z_{\delta} =1,\\
&& A_\delta z_{\delta}\le \ c_\delta,
\end{array}
\end{equation}
wherein the notation $a_{\delta}$ stands for  a vector
$(a_{I,\delta})_{I\le \delta}$, 
$u_{I,\delta}=B_{I,\delta}(\frac{I}{\delta})$, $0_{I,\delta}=0$ and
$1_{I,\delta}=1$.
\end{proposition}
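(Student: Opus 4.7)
The plan is to eliminate the lower-degree variables $z_{I,K}$ for $K < \delta$ from the LP in Eq.~\eqref{eq:lpbern3} by expressing each of them as an explicit linear combination of the highest-degree variables $z_{J,\delta}$ with $J \le \delta$, and then to show that every constraint of~\eqref{eq:lpbern3} collapses onto the shape claimed in~\eqref{eq:newlpbern}.

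First I would apply the induction identity (third item of Lemma~\ref{lem:bernprop1}) iteratively. Starting from $K = \delta$, a single application writes $z_{I,\delta_{r,-1}}$ as a convex combination of $z_{I,\delta}$ and $z_{I_{r,1},\delta}$. Iterating this reduction along a sequence of coordinate decrements from $\delta$ down to any $K \le \delta$ yields an explicit expression of the form $z_{I,K} = \sum_{J \le \delta} \lambda_{I,K,J}\, z_{J,\delta}$ with nonnegative coefficients $\lambda_{I,K,J}$. To see that this representation is path-independent (different orders of coordinate decrement might, a priori, give different $\lambda$'s), I would invoke the fact that the induction relation of Lemma~\ref{lem:bernprop1} is a genuine polynomial identity: substituting $z_{J,\delta} = B_{J,\delta}(x)$ for any fixed $x \in U$ satisfies all induction relations simultaneously and forces $z_{I,K} = B_{I,K}(x)$ uniformly. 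Since $\{B_{J,\delta}\}_{J \le \delta}$ is a linearly independent family, the coefficients $\lambda_{I,K,J}$ are uniquely determined as the degree-elevation coefficients expressing $B_{I,K}$ in the higher-degree Bernstein basis.

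Next I would substitute these expressions into each block of constraints of~\eqref{eq:lpbern3}. The bounds $0 \le z_{I,K}$ and $z_{I,K} \le B_{I,K}(I/K)$ at levels $K < \delta$ become linear inequalities in $z_\delta$; together with the original bounds at $K = \delta$, this produces the block $0_\delta \le z_\delta \le u_\delta$ plus a residual set of inequalities that I would gather into a single matrix inequality $A_\delta\, z_\delta \le c_\delta$. The partition-of-unity constraint $\sum_{I \le K} z_{I,K} = 1$ at $K < \delta$ drops out for free: summing the induction identity over all $I \le K_{r,-1}$ telescopes to the identity $\sum_{I \le K_{r,-1}} z_{I,K_{r,-1}} = \sum_{I \le K} z_{I,K}$, so only the top-level constraint $1_\delta \cdot z_\delta = 1$ needs to be retained. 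The objective $\sum_{I \le \delta} b_{I,\delta} z_{I,\delta}$ already involves only the top-level variables and is left untouched, giving $b_\delta \cdot z_\delta$.

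The main obstacle I anticipate is the path-independence argument, since there are many orders in which coordinates of $\delta$ may be decremented to reach a given $K$, and a direct combinatorial verification of consistency would be tedious; the polynomial-identity detour above sidesteps this. One final check I would make is that any feasible $z_\delta$ for~\eqref{eq:newlpbern} lifts back to a feasible solution of~\eqref{eq:lpbern3} by \emph{defining} $z_{I,K} := \sum_{J \le \delta} \lambda_{I,K,J}\, z_{J,\delta}$ for each $K < \delta$; this lift satisfies every constraint of~\eqref{eq:lpbern3} by the very construction of $A_\delta$ and $c_\delta$, so the two LPs have identical feasible projections onto the $z_\delta$ coordinates and therefore identical optimal values, which is exactly what the proposition claims.
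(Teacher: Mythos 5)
Your proposal is correct and follows essentially the same route as the paper: the paper's proof also eliminates each $z_{I,K}$ for $K<\delta$ by writing $B_{I,K}$ uniquely in the degree-$\delta$ Bernstein basis and substituting the resulting linear expression in $z_\delta$ into every constraint of~\eqref{eq:lpbern3}. You merely derive those degree-elevation coefficients by iterating the induction relation (and supply the path-independence and telescoping details that the paper leaves implicit in the phrase ``systematically replacing''), which is a faithful elaboration rather than a different argument.
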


\begin{proof}
Each Bernstein polynomial $B_{I,K}(x)$ can be written uniquely as
\[ B_{I,K}(x) = \sum_{J \leq \delta} \hat{b}^{(I,K)}_J B_{J,\delta} (x) \,\]
wherein $\hat{b}^{(I,K)}_J,\ J \leq \delta$ form the Bernstein coefficients
for the polynomial $B_{I,K}(x)$.
Translating this, we obtain the relation
\[ z_{I,K} = \sum_{J \leq \delta} \hat{b}^{(I,K)}_J z_{J,\delta} = \mathbf{\hat{b}}^{(I,K)} \cdot z_{\delta} \,\]
wherein $\mathbf{\hat{b}}^{(I,K)} $ is the vector of Bernstein coefficients
$(\hat{b}^{(I,K)}_{J})_{J \leq \delta}$ and $z_{\delta} = (z_{J,\delta})_{J \leq \delta}$.
The result can now be established by systematically replacing each variable $z_{I,K}$ for
$K < \delta$ in ~\eqref{eq:lpbern3} into an expression in terms of $z_{\delta}$. \qed 
\end{proof}

\begin{remark}
The computation of the pair $(A_\delta, c_ \delta)$ in
Eq.~\eqref{eq:newlpbern} depends only on $\delta$, and is independent
of the actual objective function. As a result, it can be computed
offline, once for a given problem setup in terms of number of
variables and $\delta$. 
\end{remark}

\begin{algorithm}[t]
\label{Algo1}
\SetAlgoVlined
\LinesNumbered
\DontPrintSemicolon
\KwData{POP objective $p(x)$, Constraints $g_1(x) \leq 0,\ldots, g_K(x) \leq 0$,  Box $\mathcal{B}$, Degree limit vector $\delta$.}
\KwResult{$p_{\delta}^*$: an underapproximation to the POP.}
\Begin{
\textbf{Transform}  $p, g_1, \ldots, g_k$ over the unit box $\mathcal{K}:\ [0,1]^n$ \label{nl:transform}\;
\textbf{Compute} matrices $(A_{\delta}, c_{\delta})$ \;
\textbf{Initialize}  $(\tilde{A_{\delta}},\tilde{c_{\delta}})$  empty matrices \label{nl:initialize} \;
\emph{changeOccurred} := TRUE \;
\While{changeOccurred}{
  $p_{\delta}, z_\delta := $ Solution to the LP ~\eqref{eq:newlpbern0} \;
  \emph{changeOccurred} := FALSE \;
  \For{each row $j$ in $(A_{\delta},c_{\delta})$}{
    \If{ $A_{\delta,j} z_{\delta} > c_{\delta}$ }{
      \emph{changeOccurred} := TRUE \;
      \textbf{Add} row $j$ from $A_{\delta},c_{\delta}$ to  $\tilde{A_{\delta}}, \tilde{c_{\delta}}$\;
      \textbf{Remove} row $j$  from $(A_{\delta},c_{\delta})$ \;
    }
  }\label{nl:forloop}
}
}
\caption{Overall algorithm for solving a POP using iterative Bernstein polynomial relaxation.}\label{Algo:iterative-bernstein-pop-solving}
\end{algorithm}
\paragraph{Iterative approach:}
In many cases, the optimal value given by the linear program
(\ref{eq:newlpbern}) can be obtained with fewer number of constraints
i.e instead of having the constraints given by the pair $(A_\delta,
c_\delta)$ only some of them are needed. In fact, often a large number
of constraints are inactive for the optimal solution.
More precisely, we solve LPs of the form:
\begin{equation}
\label{eq:newlpbern0}
\begin{array}{rllr}
{p_\delta}^{(2)}=&\text{minimize}& b_{\delta} \cdot z_{\delta} \\
&\text{s.t}&  0_{\delta} \le z_{\delta}\le u_{\delta}, \\
&&1_{\delta}\cdot z_{\delta} =1,\\
&& \tilde{A}_\delta z_\delta \le \tilde{c}_\delta,
\end{array}
\end{equation} 
where $({\tilde{A}}_\delta,{\tilde{c}}_\delta)$ contains a subset of
the rows in the matrix $(A_\delta, c_\delta)$.
 Algorithm~\ref{Algo:iterative-bernstein-pop-solving} shows the overall
iterative scheme.
\begin{enumerate}
\item Lines~\ref{nl:transform} to ~\ref{nl:initialize} show the initialization steps that involve computing the
matrices $(A_{\delta},c_{\delta})$. The incremental computation involves using matrices $(\tilde{A_{\delta}},\tilde{c_{\delta}})$
that are initially empty.

\item Solve the linear program \eqref{eq:newlpbern0}, which is initially the same as~\eqref{eq:lpbern1}.

\item At each step, we obtain the current optimal value ${p_\delta}$ and an optimal solution ${z_\delta}$.
\item The for loop in line~\ref{nl:forloop} iterates through all rows $j$ of the matrix $A_{\delta}$ such that  $A_{\delta,j} {z_\delta} \leq  c_{\delta,j}$ is violated. 
\item  We these violated rows  to the linear program \eqref{eq:lpbern1},  remove them from $(A_\delta, c_\delta)$.
\item Termination happens whenever no violated rows are found in the
  for loop.
\end{enumerate}

\paragraph{Exact relaxation:}
The decision variables $z_{I,\delta}$ introduced during the RLT
technique are fresh variables that substitute the nonlinear
polynomials $B_{I,\delta}(x)$. A sufficient condition for an exact
relaxation to hold is that optimal solutions
${z_{I,\delta}}^*=B_{I,\delta}({x}^*)$ for all $I\le \delta$ where
${x}^* \in \mathcal{K}$. It is easy to see that when this happens,
$x^*$ is in fact a global optimum for our problem.

\begin{proposition}
Let $z_{\delta}^*$ be the optimal value given by our relaxation. If there exist ${x}^*\in \mathcal{K} (:= [0,1]^n)$ such that
\begin{equation}\label{exact-relax}
{x^*}^I=\sum_{0\le J\le \delta} \frac{\binom{J}{I}}{\binom{\delta}{I}} {z_{J,\delta}}^*,\ 
\mbox{ for all }\ I\le \delta\,,
\end{equation}
then the relaxation is exact i.e $p^*={p_{\delta}}^*$ and ${\vx}^*$ is the global minimum. 
\end{proposition}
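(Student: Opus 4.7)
The plan is to show that under the hypothesis, the objective value computed at $x^*$ in the original POP equals the optimal value of the relaxation, which then must equal $p^*$ by sandwiching.

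First I would rewrite the original objective $p(x)=\sum_I p_I x^I$ at the point $x^*$ using the hypothesis~\eqref{exact-relax}. Substituting the expression for $(x^*)^I$ into $p(x^*)$ yields
\[
p(x^*) = \sum_{I\le\delta} p_I\,(x^*)^I = \sum_{I\le\delta} p_I \sum_{0\le J\le\delta} \frac{\binom{J}{I}}{\binom{\delta}{I}} z^*_{J,\delta}.
\]
Noting that $\binom{J}{I}=0$ unless $I\le J$ (componentwise), the inner sum effectively ranges over $I\le J\le\delta$, and swapping the order of summation gives
\[
p(x^*) = \sum_{J\le\delta} z^*_{J,\delta} \sum_{I\le J} \frac{\binom{J}{I}}{\binom{\delta}{I}} p_I = \sum_{J\le\delta} b_{J,\delta}\, z^*_{J,\delta},
\]
where the last equality is exactly the Bernstein coefficient formula~\eqref{eq:bernstein-coeff-formula}. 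But the right-hand side is precisely the objective value of the relaxation evaluated at $z^*_\delta$, namely $p_\delta^*$.

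Now I would close the argument by sandwiching. Since $x^*\in\mathcal{K}$, it is feasible for the POP~\eqref{eq:POP}, so $p(x^*)\ge p^*$. On the other hand, Proposition~\ref{prop:lpbern-relation} gives $p_\delta^*\le p^*$. Combining, $p^*\le p(x^*) = p_\delta^* \le p^*$, so all quantities are equal. Hence $p(x^*)=p^*$, meaning $x^*$ is a global minimizer of the POP and the relaxation is exact.

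I do not anticipate a substantive obstacle: the entire argument is algebraic manipulation that mirrors the derivation of the Bernstein expansion of $p$ itself (replacing $B_{J,\delta}(x)$ by $z^*_{J,\delta}$ throughout). The only step requiring mild care is justifying that $\binom{J}{I}$ vanishes in the multi-index sense whenever $I\not\le J$, so that the sums over $I$ and $J$ can be safely reindexed; this follows from the convention $\binom{j}{i}=0$ for $j<i$ applied coordinatewise.
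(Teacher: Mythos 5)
Your proof is correct, but it takes a slightly different route from the paper's. The paper argues that the hypothesis~\eqref{exact-relax}, viewed as the linear system $\mathcal{B}_{\delta} z_{\delta}^* = ((x^*)^I)_{I\le\delta}$, forces $z^*_{I,\delta}=B_{I,\delta}(x^*)$ for every $I$ (since the true Bernstein values satisfy the same system by~\eqref{monom-expr}), and only then computes $p_{\delta}^* = \sum_I b_{I,\delta}B_{I,\delta}(x^*) = p(x^*)$. That step silently uses the invertibility of $\mathcal{B}_{\delta}$ (it is triangular with respect to the partial order $I\le J$ with nonzero diagonal $1/\binom{\delta}{I}$, so the claim is true, but the paper does not say so). You bypass this entirely: by substituting~\eqref{exact-relax} into $p(x^*)=\sum_I p_I (x^*)^I$ and reindexing via the vanishing of $\binom{J}{I}$ for $I\not\le J$, you land directly on $\sum_J b_{J,\delta} z^*_{J,\delta} = p_{\delta}^*$ using only the coefficient formula~\eqref{eq:bernstein-coeff-formula}, and then sandwich with Proposition~\ref{prop:lpbern-relation}. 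Your argument is more self-contained and needs a strictly weaker conclusion from the hypothesis; the paper's version buys the stronger intermediate fact $z_{\delta}^* = B_{\delta}(x^*)$, which is what the subsequent exactness-checking procedure in the text actually tests for. Both are valid proofs of the proposition as stated.
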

 \begin{proof}

   The conditions (\ref{exact-relax}) can be written as a
   $\mathcal{B}_{\delta}{z_{\delta}^*}=({{x}^*}^I)_{I\le \delta}$
   where $\mathcal{B}_{\delta}$ is the matrix given by
   (\ref{exact-relax}). If this condition holds then we have
   ${z_{I,\delta}}^*=B_{I,\delta}({x}^*)$ for all $I \le \delta$ which
   implies that
   ${p_{\delta}}^*=\displaystyle{\sum_{I\le\delta } b_{I,\delta}
     B_{I,\delta}({x}^*) }= p({x}^*)$.
   This shows that ${p_{\delta}}^*$ is also an upper bound and prove
   that the condition is sufficient. 
 \end{proof}

 The converse is not necessarily true: it is easy to construct
 examples wherein $z_{\delta}^*$ is optimal and $p_{\delta}^*$
 coincides with a global optimum, but
 $z_{\delta}^* \not= B_{\delta}(\vx^*)$ for any $\vx^*$ in the domain.
 In fact, since $z_{\delta}^*$ is not unique, then we can have
 ${z_{I,\delta}}^* \neq B_{I,\delta}({x}^*)$ whereas
 $\displaystyle{\sum_{I\le\delta } b_{I,\delta}
   {z_{I,\delta}}^*}=\displaystyle{\sum_{I\le\delta } b_{I,\delta}
   B_{I,\delta}({x}^*) }$.

 Given an optimal solution $z_{\delta}^*$, we now provide a procedure that
attempts to possibly find a $x^* \in \mathcal{K}$ such that $B_{\delta}(\vx^*) = z_{\delta}^*$:
\begin{enumerate}
\item Each variable $x_j$ is itself a polynomial and thus can be written uniquely in the Bernstein form  as $x_j : \sum_{I \leq \delta} a^{(j)}_{I} B_{I,\delta}(x)$, wherein $a^{(j)}_I$ are the Bernstein coefficients of $x_j$.
\item Therefore, compute a nominal vector $\tilde{x}$ as $\tilde{x_j}: \  \sum_{I \leq \delta} a^{(j)}_{I} z^*_{I,\delta}$.
\item Use $\tilde{x}$ to check if $z_\delta^* = B_{\delta}(\tilde{x})$. If yes, we conclude exactness of our
relaxation with $\vx^* = \tilde{\vx}$, and stop. Otherwise, we conclude that no such $\vx^*$ exists.
\end{enumerate}

\begin{example}\label{ex3}
Consider the problem in Example~\ref{ex2}. For the univariate case, one can check that relaxation (\ref{eq:lpbern1}) is exact. In fact, the optimal solution is 
$z_{\delta}^*=(0.25,0.5,0.25)$. Using the previous remark, we found $\tilde{\vx}=0.5$ and we check that ${z_{I,\delta}}^*=B_{I,\delta}(\tilde{x})$ for all $I \le \delta$. Then the relaxation is exact and ${x}^*=0.5$
which corresponds to zero after a linear transformation to $[-1,1]$.
For the bivariate case, relaxation (\ref{eq:lpbern3}) is exact but the condition ($\ref{exact-relax}$) does not hold. This is due to the fact that 
$z_{\delta}^*$ is not unique.
\end{example}

\subsection{Numerical examples}

\begin{table}[t]
  \caption{Performance of the ``monolithic'' Bernstein relaxation for benchmark problems taken from Nataraj et al~\cite{Nataraj2007}. 
    \textbf{Legend:} \textsf{ID}: problem  ID as given in~\cite{Nataraj2007}, $\delta$: the maximum degrees on variables, $p_{\delta}^{(0)}$:
    Minimum Bernstein coefficient, $p_{\delta}^{(1)}$: LP relaxation in Eq.~\eqref{eq:lpbern1}, $p_{\delta}^{(2)}$: LP relaxation in Eq.~\eqref{eq:newlpbern},
    $\mbox{\#r}(A_{\delta})$: number of constraints for formulation in Eq.~\eqref{eq:newlpbern}, $\mbox{\#r}(\tilde{A}_{\delta})$: number of rows
    for the reduced problem in Eq.~\eqref{eq:newlpbern0}, $k$: number of iterations, $t$: time (seconds) to compute the matrices $(A_{\delta}, c_{\delta})$.} \label{tab}
\begin{center}
{\footnotesize
\begin{tabular}{|c|c|c|c|c|c|c|c|c|}
\hline 
 &&&&&&&&\\
\textsf{ID} & $\delta$ & ${p_\delta}^{(0)}$ & ${p_\delta}^{(1)}$ & ${p_\delta}^{(2)}$ & \#r($A_\delta$) & \#r(${\tilde{A}}_\delta$ )& $k$ & $t$\\ 
\hline 
1 & (4,4) & -1170 & -911.47 & -856.42 & 200 & 6 & 3 & 0.1 \\
\hline 
2 & (6,4) & -7990.8  & -7195 & -6709.9 & 385 & 3 & 2 & 0.2\\
\hline 
3 & (2,2) & -926 & -451 & -316 & 27 & 4 &  2& 0.1\\
\hline 
4 &(4,2) & -9994 & -6223.4 & -4721.4 & 75 & 6 & 1 & 0.1 \\
\hline 
5 & (2,2,2)& -240 & -109.5 & -66.75 & 189 & 9 & 1 & 0.1  \\
\hline 
6 & (2,4,4) & -200299 & -199930 & -139355.28 & 1563& 28 & 2 & 1.4 \\
\hline 
7 & (1,2,1) & -36.7127 & -36.7127 & -36.7127 & 42 & 0 & 1 & 0.1 \\
\hline 
8 & (2,4,4)& -20218 & -19948 & -14290.38 & 1692 & 35 & 3 & 1.4 \\
\hline 
9 & (1,1,3,3) & -3.77 & -3.77 & -3.53 &836 & 4 & 1 & 0.8 \\
\hline   
10& (1,2,2,2) & -25.2 & -21.35 & -21.35 &594 & 0 & 0 & 0.5\\
\hline 
11& (2,2,2,2) & -1020 & -542 & -260 &2700 & 36 & 1 & 1.7\\
\hline 
12& (1,1,1,1,2) & -55 & -50 & -32.5 &438 & 8 & 2 & 0.4\\
\hline 
13& (2,\ldots,2) & -11 & -6.58&-0.5&45927&449&2&1630 \\ 
\hline
14& (1,2,2,3,1,1) & -1.44 & -1.44&-1.44&9432&0&0&64.8 \\ 
\hline
15& (2,\ldots,2) & -13 & -7.5&-&-&-&-&TO \\ 
\hline
16& (2,\ldots,2) & -2.04 & -2.02&-&-&-&-&TO \\ 
\hline

\end{tabular}
}
\end{center}

\end{table}

Thus far, we have presented three LP relaxations using Bernstein
polynomials. For the formulation in Eq.~\eqref{eq:lpbern3}, we provide
a technique to reduce the number of variables by computing matrices
$(A_{\delta}, c_{\delta})$ that substitute constraints over variables
$z_{I,K}$ for $K < \delta$ in terms of variables $z_{\delta}$
(Eq.~\eqref{eq:newlpbern}).  Next, we provide an iterative approach
that avoids an upfront solution to Eq.~\eqref{eq:newlpbern},
considering an iterative and incremental addition of constraints
as in Eq.~\eqref{eq:newlpbern0}.  Also, our approach thus far is
monolithic: we translate a single instance of a POP into a LP formulation
without considering subdivisions of the feasible region $\mathcal{K}$.

We evaluate these techniques using benchmark examples proposed by Nataraj et al.~\cite{Nataraj2007}. 
In Table~\ref{tab}, we report the optimal values of the proposed 
relaxations, the size of matrices $A_\delta$ , ${\tilde{A}}_\delta$, the number of
iteration needed and the computation time for the matrix $A_\delta$ (we print `TO' if the computation time exceeds $30$ minutes). We find that 
 considerable reduction is made by considering ${\tilde{A}}_\delta$ instead of $A_\delta$
and also a considerable improvement in the lower bound is obtained when transitioning
from the simple formulation in ~\eqref{eq:lpbern1} to the larger formulation
in ~\eqref{eq:lpbern3}. However, we find that, in many cases, a monolithic LP relaxation 
by itself is not able to provide tight bounds on the optimal value.


\begin{example}\label{Himmilbeau}
  Let's consider the Himmilbeau function taken from
  ~\cite{Nataraj2007}, shown as example ID 1 in Table~\ref{tab}. The POP is given by
\begin{equation}
p(x_1,x_2)=({x_1}^2+x_2-11)^2+(x_1+{x_2}^2-7)^2 \text{ on } [-5,5]^2.
\end{equation}

Solving the LP formulation ~\eqref{eq:lpbern3} yields
${p_\delta}^{(2)}=-856.42$ .  If one used relaxation
\eqref{eq:lpbern3}), then we have a linear program with $324$
variables and $341$ constraints, without counting the roughly $628$
bounds constraints on our variables. Instead, we can solve the linear
program given by \eqref{eq:newlpbern}. In that case, we only have $25$
decision variables. The matrix $\mathcal{A}_\delta$ will contain $200$
rows and $25$ columns.  Using the iterative approach, however, we just
need $3$ iterations to obtain ${p_\delta}^{(2)}$ where the matrix
$\tilde{A}_\delta$ contains $6$ rows, in all. Thus, we achieve a
significant reduction in the size of the LP and hence the cost of
solving it.   

However, in spite of these improvements, the objective value when
using~\eqref{eq:newlpbern0} is
$-856.416$. 
This is a very coarse lower bound on the actual optimal value which is
$p^*=0$. One reason for getting a poor bound is that the considered
box is relatively big and that the optimal solution $x^*$ is located
quite far from the edges.
\end{example}
This motivates the Branch and Bound algorithms we are going to present in the next section. 
Before doing that, we will briefly show how one can extend the previous relaxations in the case of non rectangular domains.
\subsection{Extension to polyhedral and semi algebraic sets}
If $\mathcal{K}$ is a bounded polyhedral set, our POP can be formulated as follows :
\begin{equation}
\label{eq:POP+polyhedral}
\begin{array}{llr}
\text{minimize} & p(x) \\
\text{s.t} & x\in [0,1]^n, \\
&  A_0x\le b_0,
\end{array}
\end{equation}
where $A_0\in \mathbb{R}^{m\times n}$ and $b_0 \in  \mathbb{R}^m$. In fact, it suffices to compute a bounding box for the polyhedral set $\mathcal{K}$ and then map the problem to the unit
box. 


\begin{proposition}
Using the same notation, we build the following LP:
\begin{equation}
\label{eq:lpbern+polyhedral}
\begin{array}{lll}
{p_\delta}^*=&\text{minimize}&  b_{\delta} \cdot  z_{\delta} \\
&\text{s.t} &  0_{\delta}\le  z_{\delta}\le  u_{\delta}, \\
&& 1_\delta \cdot z_{\delta}=1,\\
&& \tilde{A}_\delta  z_{\delta} \le \tilde{c}_\delta,\\
&& \displaystyle{ \sum_{I\le \delta} ( A_0 \frac{I}{\delta}) z_{I,\delta}}\le b_0.
\end{array}
\end{equation}
Then ${p_\delta}^* \le p^*$, where $p^*$ is the optimal value of (\ref{eq:POP+polyhedral}).
\end{proposition}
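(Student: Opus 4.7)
The plan is to exhibit, for any feasible point of the POP~\eqref{eq:POP+polyhedral}, a corresponding feasible solution of the LP~\eqref{eq:lpbern+polyhedral} whose objective value equals the value of $p$ at that point. Since this makes $p_\delta^*$ no larger than $p(y)$ for every feasible $y$, it will yield $p_\delta^* \le p^*$.

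First I would let $y \in [0,1]^n$ be any feasible point, i.e., $A_0 y \le b_0$, and set $z_{I,\delta} := B_{I,\delta}(y)$ for all $I \le \delta$. By Lemma~\ref{lem:bernprop1}, this assignment satisfies the bound constraints $0 \le z_{I,\delta} \le B_{I,\delta}(I/\delta)$ and the partition-of-unity equation. The validity of the constraints $\tilde{A}_\delta z_\delta \le \tilde c_\delta$ for this assignment is already established by the proof of Proposition~\ref{prop:lpbern-relation} (these rows only encode the induction and non-negativity facts about Bernstein polynomials, which hold pointwise on $[0,1]^n$). What remains is to check the new polyhedral constraint $\sum_{I \le \delta}(A_0 \tfrac{I}{\delta}) z_{I,\delta} \le b_0$.

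The central step is the identity
\[
y \;=\; \sum_{I \le \delta} \tfrac{I}{\delta}\, B_{I,\delta}(y),
\]
understood componentwise. To justify it, I would compute the Bernstein expansion of the polynomial $x_j$ of degree $\delta$ using formula~\eqref{eq:bernstein-coeff-formula}: the only nonzero monomial coefficient is $p_J = 1$ for $J = e_j$, so the Bernstein coefficient indexed by $I$ equals $\binom{I}{e_j}/\binom{\delta}{e_j} = i_j/\delta_j$. Hence $y_j = \sum_{I \le \delta}(i_j/\delta_j) B_{I,\delta}(y)$, which is precisely the $j$th component of the stated identity. Applying $A_0$ to both sides and using $A_0 y \le b_0$ gives
\[
\sum_{I \le \delta}\bigl(A_0 \tfrac{I}{\delta}\bigr) B_{I,\delta}(y) \;=\; A_0 y \;\le\; b_0,
\]
so the new constraint is satisfied by $z_{I,\delta} = B_{I,\delta}(y)$. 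Thus this assignment is LP-feasible.

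Finally, the LP objective at this feasible point is $\sum_{I \le \delta} b_{I,\delta}\, B_{I,\delta}(y) = p(y)$ by definition of the Bernstein expansion of $p$. Minimizing over all LP-feasible $z$ can only decrease the value, so $p_\delta^* \le p(y)$ for every POP-feasible $y$, which after taking the infimum yields $p_\delta^* \le p^*$. The only substantive step is the identity for $y$ above; the remainder is bookkeeping that reuses Proposition~\ref{prop:lpbern-relation} and Lemma~\ref{lem:bernprop1}. The main (minor) obstacle is getting the Bernstein coefficients of the coordinate functions right — once that is in hand, the polyhedral inequality follows automatically because the $B_{I,\delta}(y)$ form a convex-combination weighting of the centers $I/\delta$.
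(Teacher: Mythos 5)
Your proposal is correct and follows essentially the same route as the paper: the paper's proof consists precisely of invoking the identity $\sum_{I\le \delta} \frac{I}{\delta} B_{I,\delta}(x) = x$ on $[0,1]^n$, which is your central step, and leaving the substitution $z_{I,\delta} = B_{I,\delta}(y)$ implicit. You have merely filled in the bookkeeping (feasibility of the Bernstein-evaluation assignment and the derivation of the identity from the coefficient formula), all of which is accurate.
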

\begin{proof}
The proof follows directly from the following property :
$$\forall x\in [0,1]^n, \; \sum_{I\le \delta} \textstyle{\frac{I}{\delta}} B_{\delta,I}(x) =x.$$
\end{proof}

Now, If $\mathcal{K}$ is a bounded semi-algebraic set, our POP can be formulated as follows :
\begin{equation}
\label{eq:POP+semi-alg}
\begin{array}{llr}
\text{minimise} & p(x) \\
\text{s.c} & x\in [0,1]^n, \\
&  g_i(x)\le 0,\;  \forall i=1,\dots,m.
\end{array}
\end{equation}
where $p$ and $g_i$ are multivariate polynomials of degree less than $\delta$ for all $i=1,\dots,m$. Then, we have the following result :
\begin{proposition}
Recall LP~\eqref{eq:lpbern+polyhedral} below:
\begin{equation}
\label{eq:lpbern+polyhedral1}
\begin{array}{lll}
{p_\delta}^*=&\text{minimize}& b_{\delta}(p) \cdot z_{\delta} \\
&\text{s.t}  &0_\delta \le z_{\delta}\le u_{\delta}, \\
&&1_\delta \cdot z_{\delta}=1 ,\\
&& \tilde{A}_\delta z_{\delta} \le \tilde{c}_\delta.\\
&& b_{\delta}(g_i) \cdot z_{\delta}\le 0, \; \forall i=1,\dots,m.
\end{array}
\end{equation}
where $b_{\delta}(p)=(b_{I,\delta}(p))_{I\le\delta}$ and $b_{\delta}(g_i)=(b_{I,\delta}(g_i))_{I\le\delta}$ are Bernstein coefficients of respectively $p$ and $g_i$ 
for all $i=1,\dots,m$.\\
Then ${p_\delta}^* \le p^*$, where $p^*$ is the optimal value of (\ref{eq:POP+semi-alg}).
\end{proposition}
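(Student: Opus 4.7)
The plan is to mirror the argument used for Proposition~\ref{prop:lpbern-relation}: take an arbitrary feasible point $x^\star$ of the POP~\eqref{eq:POP+semi-alg} and exhibit an associated feasible solution $z_\delta$ of the LP~\eqref{eq:lpbern+polyhedral1} whose LP objective equals $p(x^\star)$. Since the LP minimum will then be no larger than $p(x^\star)$ for every admissible $x^\star$, taking the infimum over $x^\star \in \mathcal{K}$ yields ${p_\delta}^\star \le p^\star$.

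The natural candidate is $z_{I,\delta} := B_{I,\delta}(x^\star)$ for each $I \le \delta$. By Lemma~\ref{lem:bernprop1} and exactly the same reasoning that appeared in the proof of Proposition~\ref{prop:lpbern-relation} (together with Proposition~\ref{prop:elim-terms} for the reduced formulation), this choice already satisfies $0_\delta \le z_\delta \le u_\delta$, the unit partition $1_\delta \cdot z_\delta = 1$, and the induction/upper-bound constraints encoded by $\tilde{A}_\delta z_\delta \le \tilde{c}_\delta$. So the only genuinely new obligation is the semi-algebraic block $b_{\delta}(g_i) \cdot z_\delta \le 0$.

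For this, I would expand each $g_i$ in the Bernstein basis at multidegree $\delta$ (using degree elevation if $\deg(g_i) < \delta$, which is always possible through formula~\eqref{eq:bernstein-coeff-formula} applied to $g_i$). Then
\begin{equation*}
g_i(x^\star) \;=\; \sum_{I \le \delta} b_{I,\delta}(g_i)\, B_{I,\delta}(x^\star) \;=\; \sum_{I \le \delta} b_{I,\delta}(g_i)\, z_{I,\delta} \;=\; b_\delta(g_i) \cdot z_\delta.
\end{equation*}
Feasibility of $x^\star$ gives $g_i(x^\star) \le 0$, hence the required LP inequality holds. Identically, the LP cost evaluates to $b_\delta(p) \cdot z_\delta = \sum_I b_{I,\delta}(p) B_{I,\delta}(x^\star) = p(x^\star)$, closing the argument.

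There is no real obstacle: the proof is a one-line consequence of the fact that the Bernstein coefficient vector is a \emph{linear} functional of the polynomial, so the map $x \mapsto B_\delta(x)$ simultaneously transports the objective $p$ and every constraint polynomial $g_i$ into their linear surrogates on the $z$-variables. The only bookkeeping subtlety worth remarking on is the uniform use of the common multidegree $\delta$ for all $g_i$ and $p$; once this is fixed, the same vector $z_\delta$ witnesses feasibility in all constraint blocks at once.
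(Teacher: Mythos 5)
Your proposal is correct and follows the same route as the paper: the paper's (one-line) proof likewise expands each $g_i$ in the Bernstein basis of multidegree $\delta$ and substitutes the fresh variables $z_{I,\delta}$, which is exactly the observation that $z_{I,\delta}:=B_{I,\delta}(x^\star)$ carries every feasible $x^\star$ to an LP-feasible point with objective $p(x^\star)$. You have simply filled in the details (feasibility of the box/RLT block via Proposition~\ref{prop:lpbern-relation} and the degree-elevation bookkeeping) that the paper leaves implicit.
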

\begin{proof}
It suffices to write polynomials $g_i$, for all $i=1,\dots,m$, in the Bernstein basis up to the degree $\delta$ and replace Bernstein polynomials using 
fresh variables $z_{I,\delta}$ for all $I \le \delta$.

\end{proof}


\section{Precision Improvements}\label{Sec:improvement}
We will now consider three different approaches to improving our
relaxation using the improved LP formulations proposed in this
section:
\begin{enumerate}[(a)]
\item We will show how further properties of Bernstein polynomials can
  result in \emph{multiaffine constraint system} that can be converted
  back into a LP through dualization. However, we will see that doing
  so yields \emph{impractically large LPs}. Therefore, this approach
  is of theoretical interest.

\item Next, we will consider using higher degrees $\delta$ in our LP
  formulations beyond the degrees of the original POP. However, we
  observe that the convergence is linear in $\frac{1}{\delta}$, and
  thus quite poor when compared to the growth in running times.

\item Finally, we will use a \emph{branch-and-bound} scheme that
  decomposes our problem domain into multiple smaller boxes, using
  many pruning ideas to limit the number of branches needed. In this
  context, we examine whether the improved LP relaxations can
  translate into fewer decompositions
  of the feasible region.
\end{enumerate}

\subsection{Further Valid Inequalities}

We now consider techniques for adding further valid inequality
constraints to the overall problem. As before, our goal is to ensure
that the added constraints are affine, or can somehow be converted to
an affine system of constraints.

\paragraph{Adding Known Positive Polynomials:} One simple approach,
following recent developments in so-called diagonally dominant
sum-of-squares is to add polynomials that are easy to show nonnegative
such as $D_{ij}(x):\ (x_i - x_j)^{2d} \geq 0$ and
$E_{ij}(x):\ (x_i+x_j)^{2d} \geq 0$, for pairs $x_i, x_j$ to the
system of constraints for degrees
$d \leq \frac{1}{2}\min(\delta_i,
\delta_j)$~\cite{AliAhmadi+Majumdar/2014/DSOS}.
To add such polynomials, we convert $D_{ij}$ and $E_{ij}$ to the
Bernstein basis, perform RLT by replacing Bernstein polynomials
$B_{I,\delta}(x)$ with a fresh variable $z_{I,\delta}$.  The resulting
constraints will also be added to the matrix $(A_{\delta},c_{\delta})$
and possibly included in the matrix
$(\tilde{A}_{\delta}, \tilde{c}_{\delta})$. However, the cone of
positive polynomials over $\mathcal{K}$ is not finitely generated cone
(even when we consider positive polynomials of bounded degrees).
Therefore, an addition of finitely many generators cannot be useful
for all problems, in general.

\subsubsection{Adding Multiaffine Constraints}
In this section, we briefly sketch a further approach to LP
relaxations that involves adding multiaffine constraints and relaxing
the resulting set of constraints back to a linear program.  The
multiaffine constraints are given by product of Bernstein polynomials.
Consider Bernstein polynomials $p_1(x):= B_{I_1,\delta_1}(x), p_2:= B_{I_2,\delta_2},\ \ldots\, p_j:= B_{I_j,\delta_j}$.
\begin{claim}
  The product $p_1 \times p_2 \times \cdots \times p_j$ is of the form
  $c(I_1,\ldots,I_j,\delta_1,\ldots,\delta_j) \times B_{I_1 + \cdots +
    I_j, \delta_1+\cdots + \delta_j} $,
  where $c(I_1,\ldots,I_j,\delta_1,\ldots,\delta_j)$ is a constant
  coefficient given by the ratio of the binomial coefficients.
\end{claim}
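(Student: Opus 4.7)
The plan is to prove the claim by direct unfolding of the multivariate Bernstein definition, since the statement is essentially a bookkeeping identity about products of monomials and powers of $(1-x)$.

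First I would recall from the preliminaries that, under the multi-index notation introduced earlier, every multivariate Bernstein polynomial factors cleanly as
\begin{equation*}
B_{I_k,\delta_k}(x) \;=\; \binom{\delta_k}{I_k}\, x^{I_k}\,(1_n - x)^{\delta_k - I_k},
\end{equation*}
for each $k = 1, \ldots, j$. Multiplying these $j$ factors together, I would pull out the product of binomial coefficients, collect the $x$-powers using $x^{I_a}\cdot x^{I_b} = x^{I_a + I_b}$, and collect the $(1_n-x)$-powers using $(1_n-x)^{\delta_a - I_a}\cdot(1_n-x)^{\delta_b - I_b} = (1_n-x)^{(\delta_a + \delta_b) - (I_a + I_b)}$. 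This yields
\begin{equation*}
p_1(x)\cdots p_j(x) \;=\; \Bigl(\textstyle\prod_{k=1}^j \binom{\delta_k}{I_k}\Bigr)\, x^{\,I_1+\cdots+I_j}\,(1_n - x)^{(\delta_1+\cdots+\delta_j) - (I_1+\cdots+I_j)} .
\end{equation*}

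Next I would multiply and divide by $\binom{\delta_1+\cdots+\delta_j}{I_1+\cdots+I_j}$ so that the $x$-and-$(1_n-x)$ factor, together with that binomial, reassemble into a single Bernstein polynomial of the combined degree. This gives
\begin{equation*}
p_1(x)\cdots p_j(x) \;=\; c(I_1,\ldots,I_j,\delta_1,\ldots,\delta_j)\, B_{I_1+\cdots+I_j,\;\delta_1+\cdots+\delta_j}(x),
\end{equation*}
with the explicit coefficient
\begin{equation*}
c(I_1,\ldots,I_j,\delta_1,\ldots,\delta_j) \;=\; \frac{\prod_{k=1}^j \binom{\delta_k}{I_k}}{\binom{\delta_1+\cdots+\delta_j}{\,I_1+\cdots+I_j\,}} ,
\end{equation*}
which is precisely the ratio of binomial coefficients asserted in the claim.

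There is no real obstacle here; the statement is a direct consequence of how products of monomials and powers behave, and of the definition $B_{I,\delta}(x)=\binom{\delta}{I}x^I(1_n-x)^{\delta-I}$. The only thing to be careful about is the multi-index bookkeeping: one must check componentwise that $I_1+\cdots+I_j \le \delta_1+\cdots+\delta_j$ so that the resulting Bernstein polynomial is well-defined (which is automatic from $I_k \le \delta_k$ for each $k$), and that the binomial $\binom{\sum \delta_k}{\sum I_k}$ in the denominator is nonzero. If one preferred, the argument could equivalently be done by induction on $j$: the base case $j=2$ is the univariate computation $\beta_{i_1,m_1}\beta_{i_2,m_2} = \binom{m_1}{i_1}\binom{m_2}{i_2}\binom{m_1+m_2}{i_1+i_2}^{-1}\beta_{i_1+i_2,m_1+m_2}$ extended to each coordinate, and the inductive step is an immediate application of the base case to the already-combined product.
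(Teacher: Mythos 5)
Your computation is correct, and it is the natural argument: the paper states this claim without proof, so the direct unfolding of $B_{I,\delta}(x)=\binom{\delta}{I}x^{I}(1_n-x)^{\delta-I}$, combining exponents and reassembling the single binomial $\binom{\delta_1+\cdots+\delta_j}{I_1+\cdots+I_j}$, is exactly the verification the authors leave implicit. Your explicit formula $c = \prod_{k=1}^{j}\binom{\delta_k}{I_k}\big/\binom{\sum_k\delta_k}{\sum_k I_k}$ correctly instantiates the ``ratio of binomial coefficients'' mentioned in the claim, and your check that $\sum_k I_k \le \sum_k \delta_k$ componentwise is the only well-definedness point that needs attention.
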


This allows us to provide additional constraints in the formulation~\eqref{eq:lpbern3}
of the form:
\[ z_{I_1,K_1} z_{I_2,K_2} \cdots z_{I_j,K_j} = c(I_1,\ldots,I_j, \delta_1, \ldots, \delta_j)z_{I_1+\ldots,I_j,K_1+\ldots+K_j} \]

The addition of these constraints yields a system of linear multiaffine constraints of the following form:
\begin{equation}\label{eq:bernmultiaffine}
\begin{array}{rcccc}
\min & c\dot z_{\delta} \\
s.t. & \tilde{A_{\delta}} z_{\delta} & \leq & \tilde{c_{\delta}} & \mbox{Linear relationships between Bernstein polynomials}\\
     & z_{\delta} & \in & [L_{\delta},U_{\delta}] & \mbox{bounds constraints } \\
     & z_{i_1} \cdots z_{i_j} &=& c_i z_i& \mbox{ multiaffine equality constraints } \\
\end{array}
\end{equation}

As such, the multi-affine system above is, in fact, a nonlinear system of constraints. However, the following result
by Ben Sassi and Girard~\cite{Bensassi+Automatica}, shows that any such system can be relaxed to yield a linear
programs.

\begin{claim}[Ben Sassi + Girard~\cite{Bensassi+Automatica}].
  The multi-affine formulation in Eq.~\eqref{eq:bernmultiaffine} can be
  relaxed to yield a linear program whose optimal value
  lower bounds that of the multi-affine
  system~\eqref{eq:bernmultiaffine}.
\end{claim}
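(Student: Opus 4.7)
The plan is to replace each multi-affine equality constraint $z_{i_1}\cdots z_{i_j} = c_i z_i$ in~\eqref{eq:bernmultiaffine} by a finite family of affine inequalities that over-approximate the graph of the product over the box $[L_\delta,U_\delta]$, so that every feasible point of the multi-affine system remains feasible in the resulting purely linear system. Since the objective $c\cdot z_\delta$ is already linear in the same decision variables, minimizing it over a larger feasible region will automatically produce a lower bound on the optimum of~\eqref{eq:bernmultiaffine}.

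First I would exploit the key structural fact about multi-affine functions over a product of intervals: a multi-affine function of $j$ variables over $\prod_{k=1}^j [L_{i_k}, U_{i_k}]$ attains its extrema at the $2^j$ vertices of the box, and its graph lies inside the convex hull of the corresponding vertex values. Consequently, for a fresh auxiliary variable $w$ standing for the product, the set $\{(z_{i_1},\ldots,z_{i_j},w):\ w = z_{i_1}\cdots z_{i_j},\ L_{i_k}\le z_{i_k}\le U_{i_k}\}$ admits a finite polyhedral outer description in terms of $w$ and the $z_{i_k}$. In the bilinear case these are the classical four McCormick inequalities; in the general multi-affine case Ben Sassi and Girard~\cite{Bensassi+Automatica} give an explicit construction via evaluations at the box vertices.

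The construction of the relaxation is then mechanical: for every multi-affine equality, (i) introduce an auxiliary variable $w$ for the product, (ii) rewrite the original equality as the already linear constraint $w = c_i z_i$, and (iii) adjoin the polyhedral envelope inequalities for $w = z_{i_1}\cdots z_{i_j}$ over the relevant box. Collecting these with the linear constraints $\tilde{A}_\delta z_\delta \le \tilde{c}_\delta$ and the bounds $z_\delta \in [L_\delta,U_\delta]$ yields an honest LP. The lower-bound property is then immediate: any $z_\delta$ feasible for~\eqref{eq:bernmultiaffine}, augmented by $w = z_{i_1}\cdots z_{i_j}$, lies on the graph of the product, hence inside its convex envelope, hence satisfies every envelope inequality; thus the LP's feasible set contains that of~\eqref{eq:bernmultiaffine}.

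The main obstacle is not correctness but scalability. The number of envelope inequalities needed to describe the convex hull of a $j$-linear product grows like $O(2^j)$ per multi-affine constraint, and the Bernstein reformulation can introduce many such constraints with various arities. Thus the hard part is to argue that the resulting LP is genuinely useful (rather than merely finite), and to justify that the vertex-based polyhedral description really does capture the full convex envelope of a general multi-affine graph, not just a bilinear one; this last point is exactly what is borrowed from~\cite{Bensassi+Automatica}, and is what makes the conversion to an LP truly mechanical.
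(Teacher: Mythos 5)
Your proof is correct, but it takes a genuinely different route from the paper's. The paper follows Ben Sassi and Girard's Lagrangian dualization: one writes the Lagrangian $L(z,\mu,\lambda)$ of~\eqref{eq:bernmultiaffine}, observes that $L$ is multi-affine in $z$ (linear objective plus multi-affine constraints), so its minimum over the box $[L_\delta,U_\delta]$ is attained at a vertex, whence the dual function is $\min_{v\in V}L(v,\mu,\lambda)$ --- a pointwise minimum of affine functions of $(\mu,\lambda)$ --- and maximizing it is an LP; weak duality then gives the lower bound. That LP lives in the dual variables and has one constraint per vertex of the \emph{full} box, i.e.\ $2^{|z_\delta|}$ constraints. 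You instead build a primal polyhedral relaxation: an auxiliary variable per product, the vertex-generated (generalized McCormick) envelope inequalities for each multilinear graph, and the lower bound follows from feasible-set containment. Both arguments rest on the same underlying fact --- a multi-affine function on a box interpolates its vertex values, so its graph lies in the convex hull of the vertex points and its extrema are attained at vertices --- but you deploy it on the primal side and the paper on the dual side. Your version localizes the exponential blowup to $O(2^j)$ inequalities per product of arity $j$ (typically small) rather than $2^{|z_\delta|}$ constraints globally, and it stays in the original variable space where it can be combined with the other Bernstein cuts; the paper's version needs no auxiliary variables and no explicit envelope description, and is the form actually proved in~\cite{Bensassi+Automatica}. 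You are also right that validity of your relaxation needs only \emph{valid} outer inequalities, with the exact convex envelope (Rikun-type vertex description of multilinear graphs) required only for tightness, not for the lower-bound claim itself.
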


The central idea behind Ben Sassi and Girard's result involves writing
down the Lagrangian $L(z,\mu,\lambda)$ involving the primal variables
$z$ and multipliers $\mu, \lambda$ for the equalities and inequalities
in the optimization problem~\eqref{eq:bernmultiaffine}. It is noted that the
function $L$ is multi-affine in $z$, and also that the optimal value of
a multi-affine function in a box $[L_{\delta}, U_{\delta}]$ is achieved
at its vertices. Therefore, the dual is obtained as
$\min_z L(\mu,\lambda) = \min_{v \in V} L(v,\mu,\lambda)$, where $V$
represents the verticesof the box $[L_{\delta},U_{\delta}]$.  As a
result of this, the resulting LP is exponential in the number of
variables in $z_{\delta}$, which is already $O(n^{|\delta|})$.

As a result, even the addition of additional multi-affine facts
involving Bernstein polynomials can cause an unacceptable blowup in
the problem size.

\subsection{Higher Degree Relaxations}

To improve the precision of the computed lower bound one can increase
the degree of the relaxation $\delta$. However, if we use the simpler
formulations in Eq.~\eqref{eq:lpbern1}, then increasing
$\delta$ alone does not necessarily yield a better optimal value.

\begin{example}
  In the Example~\ref{ex2}, we saw that for $\delta=(2,2)$, the
  optimal value ${p_{\delta}}^{(1)}=-0.5$ using formulation in
  Eq.~\eqref{eq:lpbern1}. Now increasing the degree to
  $\delta'=(3,2)$, one can verify that ${p_{\delta'}}^{(1)}=-0.59$
  which is a worse bound.
\end{example}

However, if we used the formulation in Eq.~\eqref{eq:lpbern3}
or the equivalent formulations in ~\eqref{eq:newlpbern} and
~\eqref{eq:newlpbern0}, then it is easy to see that increasing
the degree $\delta$ will result in the addition of more constraints
to the LP and thus, cannot make the lower bound worse.
Increasing the degree of the approximation eventually results in 
tighter bounds that asymptotically converge to the globally optimal bound.
This  is motivated by the following
result by Lin and Rokne~\cite{bern3}:
\begin{proposition}
For a degree $K \in \mathbb{N}^n$, let $b_{I,K}$ denote Bernstein coefficients for a polynomial $p$. Then:
\label{theo:conv}
$$
\| b_{I,K} - p \left( {\frac{I}{K}}\right) \| = O\left(\frac{1}{k_1}+\dots+\frac{1}{k_n}\right).
$$
\end{proposition}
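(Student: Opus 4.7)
The plan is to proceed by a tensorization argument, reducing the claim to the univariate monomial case and then extending by linearity of the Bernstein coefficient map in both directions (scalar combinations of polynomials, and products across dimensions).

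\emph{Step 1 (Univariate monomials).} Specialize to $p(x) = x^s$ with $s \le m$. By Eq.~\eqref{eq:bernstein-coeff}, the Bernstein coefficient collapses to $b_{i,m} = \binom{i}{s}/\binom{m}{s} = \prod_{\ell=0}^{s-1}\frac{i-\ell}{m-\ell}$, which I would compare against $(i/m)^s = \prod_{\ell=0}^{s-1}\frac{i}{m}$ using the telescoping product identity
\[ \prod_{\ell} a_\ell - \prod_{\ell} b_\ell \;=\; \sum_{\ell} \Bigl(\prod_{\ell'<\ell} a_{\ell'}\Bigr)(a_\ell - b_\ell)\Bigl(\prod_{\ell'>\ell} b_{\ell'}\Bigr). \]
Since $0 \le a_\ell, b_\ell \le 1$ for $i \le m$ and a direct calculation gives $a_\ell - b_\ell = \frac{i-\ell}{m-\ell} - \frac{i}{m} = \frac{\ell(i-m)}{m(m-\ell)}$, each term has magnitude at most $\frac{\ell}{m-\ell} \le \frac{2\ell}{m}$ once $m \ge 2s$. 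Summing yields $\bigl|b_{i,m} - (i/m)^s\bigr| \le C_s/m$ uniformly in $i \in \{0,\dots,m\}$.

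\emph{Step 2 (Univariate polynomials).} Since the map $p \mapsto (b_{i,m})_i$ defined by Eq.~\eqref{eq:bernstein-coeff} is linear in the monomial coefficients $p_j$, writing $p = \sum_j p_j x^j$ and applying Step 1 to each monomial gives a uniform bound $\bigl|b_{i,m} - p(i/m)\bigr| \le \bigl(\sum_j |p_j| C_j\bigr)/m = O(1/m)$.

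\emph{Step 3 (Multivariate monomials).} For $p(x) = x^{J_0}$ with $J_0 \le K$, the coefficient formula in Eq.~\eqref{eq:bernstein-coeff-formula} factorizes into a tensor product: $b_{I,K} = \prod_{r=1}^n \binom{i_r}{j_{0,r}}/\binom{k_r}{j_{0,r}}$, while $p(I/K) = \prod_{r=1}^n (i_r/k_r)^{j_{0,r}}$. Applying the same product-difference identity as in Step 1, but now across the $n$ coordinate directions, and invoking the univariate bound from Step 1 on each factor, I obtain
\[ \bigl|b_{I,K} - p(I/K)\bigr| \;\le\; \sum_{r=1}^n \frac{C_{j_{0,r}}}{k_r} \;=\; O\!\left(\frac{1}{k_1}+\cdots+\frac{1}{k_n}\right). \]

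\emph{Step 4 (General multivariate polynomials).} Linearity of Eq.~\eqref{eq:bernstein-coeff-formula} in $(p_J)_{J\le K}$ lets me sum the per-monomial bounds from Step 3, absorbing the coefficients of $p$ into the implied constant.

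The main obstacle is really just the bookkeeping in Step 1: making sure the telescoping bound is uniform in $i$ (including at the boundary values where some factors vanish), and tracking how the constant depends on the monomial degree $s$ so that the sum in Step 2 remains finite. Once those dependences are clean, the multivariate extension via the tensor product is essentially routine, because the factorization of $B_{I,K}$ into a product of univariate Bernstein polynomials mirrors the factorization of $(I/K)^{J_0}$ exactly.
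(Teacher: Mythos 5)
The paper does not actually prove this proposition: it is stated as a quoted result of Lin and Rokne (reference \cite{bern3} in the bibliography) and used as motivation for higher-degree relaxations, so there is no in-paper argument to compare against. Your proposal supplies a self-contained elementary proof, and it is essentially correct: the reduction to univariate monomials via linearity of $p\mapsto (b_{I,K})$, the exact identity $b_{i,m}(x^s)=\binom{i}{s}/\binom{m}{s}=\prod_{\ell=0}^{s-1}\frac{i-\ell}{m-\ell}$, the telescoping product estimate, and the tensorization $\binom{I}{J_0}/\binom{K}{J_0}=\prod_r\binom{i_r}{j_{0,r}}/\binom{k_r}{j_{0,r}}$ matching the factorization of $(I/K)^{J_0}$ are all sound, and the resulting bound $\sum_r C_{j_{0,r}}/k_r$ is exactly the claimed rate. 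Two bookkeeping points deserve care beyond what you flagged. First, your parenthetical ``$0\le a_\ell\le 1$'' is literally false when $i<\ell$ (e.g.\ $i=0$, $\ell=1$ gives $a_1=-1/(m-1)$); what you actually need, and what holds, is $|a_\ell|\le\frac{s-1}{m-s+1}\le 1$ for $m\ge 2s-2$, so the telescoping bound survives with absolute values, and for $i<s$ the product vanishes anyway while $(i/m)^s\le s^s/m$. Second, the estimate $\frac{\ell}{m-\ell}\le\frac{2\ell}{m}$ only covers $m\ge 2s$; the finitely many remaining $m$ must be absorbed into the constant (trivially, since the difference is bounded by $2$). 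With those repairs the argument is complete, and it is arguably more transparent than chasing the reference: it makes explicit that the constant in the $O(\cdot)$ depends on the $\ell_1$-norm of the monomial coefficients of $p$ and quadratically on its degree, which the bare statement in the paper leaves unspecified.
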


Nevertheless, this convergence can be quite slow in practice.

\begin{table}[t]
\caption{Improvement of the lower bound by considering higher dimension relaxations}
\label{table1}
\begin{center}
\begin{tabular}{|c|c|c|c|c|c|c|c|}\hline $\delta '$& (5,4) & (5,5) & (6,6) & (10,10) & (20,19)& (20,20) 
                                                 \\\hline  $p_{\delta'}^{(2)}$& -738.918 & -582.783 & -436.57 & -165.89 &-63.89 &-62.23 \\\hline
\end{tabular} 
\end{center}

\end{table}

\begin{example}

Let's consider again the POP~\eqref{ex:rlt}, by increasing the degree, we obtain the results reported in Table~\ref{table1}.
The results show initially large improvements upon increasing the degree. However, it is clear that large
degrees are needed to approach the optimal value of $p^* = 0$.
\end{example}

This motivates us to consider the approaches developed in the previous section
inside a branch-and-bound solver that recursively partitions the
feasible region into smaller region, while lower bounding the optimal
value inside each region using the approach considered here. In this
setting, a better lower bound can potentially lead to fewer branches,
and therefore a better performance.

\subsection{Branch-and-bound scheme}
In this section, we consider the branch-and-bound approach for solving
POPs and integrate the improved LP formulation in
Algorithm~\ref{Algo:iterative-bernstein-pop-solving} into our overall
branch-and-bound scheme. Our branch-and-bound scheme is built on top
of previous work by Nataraj et al.~\cite{Nataraj2007} that is based on
a simple formulation that involves finding the minimum Bernstein
coefficient inside each box decomposition considered by the
algorithm. Additionally, their approach uses properties such as the
vertex condition and a monotonicity condition (described below) to
detect leaf nodes. We augment our approach directly inside their
framework by iteratively solving LPs as described in
Algorithm~\ref{Algo:iterative-bernstein-pop-solving}. While solving a
LP is more expensive than finding the minimum Bernstein coefficient,
we show that the extra overhead is offset by our ability to consider
fewer boxes.

\begin{algorithm}[t]
\SetAlgoVlined
\LinesNumbered
\DontPrintSemicolon
\KwData{Objective: $p(x)$, constraints $g_1(x) \geq 0, \ldots, g_k(x) \geq 0$ and domain $x \in \mathcal{K}$.}
\KwResult{Lower bound $p$ to the optimal value of POP.}
\Begin{
  worklistOfBoxes := \{ $\mathcal{K}$ \} \label{nl:init-worklist}\;
  glbMin := $+ \infty $ \label{nl:init-glb-min}\;
  \While{ $\mbox{worklistOfBoxes} \not= \emptyset$}{
    $\mathcal{B}$ := pop (worklistOfBoxes )\;
    \tcc{Call Algorithm~\ref{Algo:iterative-bernstein-pop-solving} as a subroutine }
  (pB,zB) := \textbf{Compute} Bernstein lower bound for $p(x)$ on $\mathcal{B}$ \label{nl:bernstein-bounds} \;

    \tcc{Check if we have to branch further}
    \tcc{1. Is the value computed exact for $\mathcal{B}$ }
    exact := \textbf{Check} if $(pB,zB)$ is an exact solution to $\scr{B}$ \label{nl:exactness-check}\;
    \tcc{2. Monotonicity check.}
    monotone := \textbf{Check} if $\frac{\partial p}{\partial x_r}$ is sign invariant over $B$ for each $x_r$. \;
    \tcc{3. Heuristic termination condition. eg., box size is below threshold }
    terminal := \textbf{Check} if we can terminate the branch-and-bound for $\scr{B}$ \label{nl:termination-check}\;
      \If{monotone}{
          \tcc{4. Create edge subproblem $\hat{p},\hat{B}$.}
          $\hat{pB}$ := branchAndBound($\hat{p}$, $\hat{B}$) \;
          glbMin := $\min$(glbMin, $\hat{pB}$)\;
       }\ElseIf{terminal or exact}{
        glbMin := $\min$(glbMin,pB) \;
    }\Else {
      \tcc{Branch into multiple subboxes }
      $(\mathcal{B}_1,\ldots, \mathcal{B}_k )$ := splitBox ($\mathcal{B}$) \label{nl:branch} \;
      add $\mathcal{B}_1, \ldots, \mathcal{B}_k$ to worklistOfBoxes \;
    }
  }
}
\caption{Basic Branch-And-Bound scheme for solving POPs.}\label{Algo:basic-branch-and-bound-algo}
\end{algorithm}

\subsubsection{Overview of Branch and Bound Algorithm}

The main idea of the branch-and-bound (BB) algorithm is to keep
subdividing the rectangular domain into sub-boxes until a termination
condition can be  obtained. Algorithm~\ref{Algo:basic-branch-and-bound-algo} shows the
basic branch and bound scheme. It involves repeated decompositions of
the original box $\mathcal{K}$ to construct a \texttt{worklistOfBoxes}
that should become empty (ideally) in order to ensure termination. 

The algorithm's behavior and performance depends critically on three
key operations: (a) The precise relaxation used to compure the bound
for $p(x)$ in line~\ref{nl:bernstein-bounds}, (b) The exactness 
test in lines~\ref{nl:exactness-check} and termination check in line
~\ref{nl:termination-check}, and (c) The branching step in
line~\ref{nl:branch}.

\subsubsection{Exactness Test}

The exactness test is performed to infer if the current lower bound
$pB$ for $p(x)$ over a given box $B$ is in fact the optimal value.
This is achieved by testing for the vertex condition and a
monotonicity condition. The vertex condition is described in
Corollary~\ref{cor:bernbound1} (page~\pageref{cor:bernbound1}).  This
is quite easy to test once we transform the problem from the current
box $B$ to $[0,1]^n$ using the mapping $x' = T(x)$, and compute the
Bernstein coefficients of $p(T(x))$. 
\subsubsection{Monotonicity Test}
The monotonicity test (originally proposed by Nataraj et
al.~\cite{Nataraj2007}) checks whether
$0 \not\in \frac{\partial p}{\partial x_r}$ for $r=1,\dots,n$ where
$\vx \in B$. If the partial derivative $p_r$ w.r.t some $x_r$ is sign
invariant over $B$, then the global minimum of $p$ in $B$ can be obtained at
one of the bounds: $x_r = \ell_r$ or $x_r = u_r$, depending on the
sign of $p_r$.  The derivative $p_r$ is also expressed using Bernstein
polynomials, where the coefficients are computed directly from the
Bernstein coefficients of $p$. The monotonicity test is computed along
each dimension $x_r$ by computing the Bernstein coefficients of
$\frac{\partial p}{\partial x_r}$. If the polynomial is deemed
monotone along $x_r$, then depending on the sign of the partial
derivative, $x_r$ is substituted by its lower (partial derivative is
positive) or upper (partial derivative is negative) bound in $B$.  In
particular, further decomposition of $B$ is unnecessary in this
case. However, since the global minimum may lie along a facet, we
create an ``edge'' subproblem $\hat{p}$ by substituting $x_i = \ell_i$
for each monotonically increasing variable $x_i$ and $x_i = u_i$ for each
monotonically decreasing $x_i$. The resulting subproblem has strictly
fewer variables than the original problem, and is solved recursively using
the same branch-and-bound procedure.

\subsubsection{Termination Test}

The main termination test compares the current lower bound for the box
$B$ against the best upper bound $\hat{p}$ obtained by sampling
feasible points in the original feasible region $\scr{K}$. If the
lower bound $pB \leq (1 - \epsilon) \hat{p}$ (alternatively
$pB \leq - \epsilon$ when $\hat{p} = 0$), we do not subdivide the box
further. Another approach to cutting off the branch-and-bound imposes
a bound on the volumes of boxes that can be subdivided.

\subsubsection{Computing Lower Bounds}

Next, we consider the computation of lower bounds to a polynomial
$p(x)$ over a box $B$. This is a key step in our branch-and-bound
scheme. We consider the three relaxtions defined in
Eqs.~\eqref{eq:lpbern0},~\eqref{eq:lpbern1} and ~\eqref{eq:lpbern3}.
As mentioned earlier, using~\eqref{eq:lpbern0} is equivalent to
computing the minimal Bernstein coefficient as originally suggested by
Nataraj et al.~\cite{Nataraj2007}. However, the relaxtions in
~\eqref{eq:lpbern1} and ~\eqref{eq:lpbern3} involve solving linear
programming problems that are more expensive when compared to finding
the smallest Bernstein coefficient. On the other hand, the advantage
is that we obtain tighter bounds that may allow us to use fewer
decompositions. 

As a further optimization, we build a function called ``First-LP''
that attempts to provide a lower bound for ~\eqref{eq:lpbern1} directly without using a
LP solver by finding a dual feasible solution for it.
We rewrite~\eqref{eq:lpbern1} as follows:
\begin{equation}\label{eq:bern1rewritten}
\begin{array}{rll}
\min & \vb^t \vz \\
\mathsf{s.t.} & \vz & \geq 0 \\
& - \vz & \geq - \vu \\
& 1^t \vz & =  1 \\
\end{array}
\end{equation}
wherein $\vb$ is the vector of Bernstein coefficients and $\vu$ represents
the vector of upper bounds. Let us sort the Bernstein coefficents in $\vb$ and
without loss of generality we write:
\[ b_1 \leq b_2 \leq \cdots \leq b_N \,.\]
Next, let $b_l$ be an index such that $b_l \leq 0$ and $b_{l+1} > 0$.
If $b_1 \geq 0$ then $\vz^* = 0$ is an optimal solution
to~\eqref{eq:bern1rewritten}. On the other hand, if $b_N \leq 0$, then
we take $l=N$. Note that $b_1$ is the optimal value for the
relaxation~\eqref{eq:lpbern0}. Next, we choose the index
$q = \max \{ i \in [1,l-1] \ |\ \sum_{j=1}^i u_j \leq 1 \} $.

\begin{lemma}\label{Lem:lowerbound1}
The optimal value of ~\eqref{eq:bern1rewritten} is lower bounded by 
$\max (b_1, b_{q+1} + \sum_{j=1}^q b_j u_j)$.
\end{lemma}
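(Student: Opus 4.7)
The plan is to lower-bound $\vb^t\vz$ for an arbitrary primal-feasible $\vz$ (i.e.\ $0 \le \vz \le \vu$ and $\mathbf{1}^t\vz = 1$) and show that each of the two quantities inside the claimed maximum is such a bound, so that their maximum is too. The first bound $b_1$ is immediate: since $b_i \ge b_1$ for every $i$ and $z_i \ge 0$, we have $\vb^t\vz = \sum_i b_i z_i \ge b_1 \sum_i z_i = b_1$, using $\mathbf{1}^t\vz = 1$. This is precisely the relaxation~\eqref{eq:lpbern0} bound, recovered inside~\eqref{eq:bern1rewritten}.

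For the second bound I would split the sum around index $q+1$. By the sorted order, $b_i \le b_{q+1}$ for $i \le q$, and by the definition of $q$ (which forces $q+1 \le l$) we have $b_{q+1} \le 0$, so each $b_i$ with $i \le q$ is also nonpositive. Hence $b_i z_i \ge b_i u_i$ for $i \le q$, using $z_i \le u_i$ together with $b_i \le 0$. For $i \ge q+1$, $b_i \ge b_{q+1}$ and $z_i \ge 0$ give
\[
\sum_{i \ge q+1} b_i z_i \;\ge\; b_{q+1}\sum_{i \ge q+1} z_i \;=\; b_{q+1}\Bigl(1 - \sum_{i \le q} z_i\Bigr).
\]
Combining the two ranges yields $\vb^t\vz \ge \sum_{j=1}^q b_j u_j + b_{q+1}\bigl(1 - \sum_{i \le q} z_i\bigr)$.

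The last step is to eliminate the slack $t := 1 - \sum_{i \le q} z_i$. Because $z_i \le u_i$ and $\sum_{i \le q} u_i \le 1$ by the defining property of $q$, we get $t \in [0,1]$; combined with $b_{q+1} \le 0$ this gives $b_{q+1}\, t \ge b_{q+1}$, and therefore $\vb^t\vz \ge b_{q+1} + \sum_{j=1}^q b_j u_j$. Since this and $b_1$ are lower bounds for every primal feasible $\vz$, their maximum is a lower bound for the optimal value of~\eqref{eq:bern1rewritten}.

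The main obstacle is essentially sign bookkeeping rather than any deep step: inequalities must flip in the correct direction, and this is precisely where the hypothesis $q+1 \le l$ (equivalently, $b_{q+1} \le 0$) is used \emph{twice} -- once to drive $b_i z_i \ge b_i u_i$ for $i \le q$ and once to drive $b_{q+1}t \ge b_{q+1}$. The argument is implicitly dual, constructing the feasible dual solution $\gamma = b_{q+1}$, $\beta_i = (b_{q+1} - b_i)_+$, $\alpha_i = (b_i - b_{q+1})_+$, but I find the primal-side split-and-estimate above more transparent and it avoids setting up the dual explicitly.
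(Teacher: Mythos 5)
Your proof is correct, and it takes a genuinely different (though dual-equivalent) route from the paper. The paper proves this lemma by explicitly writing down the dual LP of~\eqref{eq:bern1rewritten}, exhibiting the dual feasible point $\vlam_j = -b_j$ for $j \le q$, $\vlam_j = 0$ for $j > q$, $\mu = b_{q+1}$ (together with $\mu = b_1$, $\vlam = 0$ for the first bound), and invoking weak duality. You instead work entirely on the primal side: you split $\vb^t\vz$ at index $q$, use $b_i \le b_{q+1} \le 0$ and $z_i \le u_i$ to get $b_i z_i \ge b_i u_i$ on the low block, use sortedness and $z_i \ge 0$ on the high block, and then dispose of the slack $t = 1 - \sum_{i\le q} z_i$ via $t \in [0,1]$ and $b_{q+1} \le 0$ --- where $t \ge 0$ correctly relies on the defining property $\sum_{j\le q} u_j \le 1$ of $q$. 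Your sign bookkeeping is right at every flip, and you correctly identify that $q+1 \le l$ (hence $b_{q+1}\le 0$) is the hypothesis doing all the work. The trade-off: the paper's dual construction is the form one would actually want inside a branch-and-bound code (a certificate that can be checked and warm-started), whereas your primal estimate is more elementary and self-contained, needing no duality theory at all. One small remark: the dual point you sketch in your closing parenthetical ($\beta_i = (b_{q+1}-b_i)_+$) actually certifies the slightly stronger bound $b_{q+1}\bigl(1 - \sum_{j\le q}u_j\bigr) + \sum_{j\le q} b_j u_j$, which is exactly the quantity you discard when you relax $b_{q+1}t$ down to $b_{q+1}$; both dominate the stated bound, so nothing is lost for the lemma as claimed.
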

\begin{proof}
  We first formulate the dual to~\eqref{eq:bern1rewritten}.  Let us
  use the multiplier $\vlam$ corresponding to the upper bound
  constraints $-\vz \geq \vu$ and $\mu$ corresponding to the equality
  constraint $1^t \vz = 1$.  The (simplified) dual LP is given as
  \[ \begin{array}{rlll}
       \max\ & \mu - \vu^t \vlam \\
       \mathsf{s.t.} & \mu 1 - \vlam & \leq \vb \\
                   & \vlam & \geq 0 \\
     \end{array}\]
   We set the dual solutions as $\vlam_j = - b_j$ for $j \in [1,q]$ and 
   $\vlam_j = 0$ for $j > q$. Finally we set $\mu =  b_{q+1}$. 
   We can verify  that all the
   dual constraints are satisfied. Thus our solution is dual feasible.
   We also note that it yields a dual objective value of 
   $b_{q+1} + \sum_{j=1}^q b_j u_j$ as required. In contrast, setting $\mu = b_1$ and
   $\vlam = 0$ yields another dual feasible solution. The rest follows by 
   applying the standard weak duality theorem for linear programs.
\end{proof}
It is possible to provide precise conditions under which the dual
feasible solution is in fact dual optimal, and obtain a corresponding
primal optimal solution. The advantage of using a dual lower bound in
a branch-and-bound scheme is that it provides an improved bound
over~\eqref{eq:lpbern0} but at a reduced computational cost that
involves sorting the Bernstein coefficeints and performing a linear
time scan over them to identify the indices $l,q$ which is less expensive than solving~\eqref{eq:lpbern1}. For~\eqref{eq:lpbern3}, a lower bound 
is obtained by considering the optimal value given by First-LP, construct an associate feasible solution to it, then perform an iterative approach
to improve this optimal value.

\subsubsection{Numerical Results}
The algorithms described thus far were implemented inside the
MATLAB(tm) environment using the inbuilt \texttt{linprog} function for
solving linear programs.  We compare our three algorithms using a set
of 18 benchmarks to evaluate whether the additional inequalities lead
to (a) fewer boxes being examined by our branch-and-bound scheme and
(b) overall improvement in the computation time.  The first 16
benchmarks are collected from Nataraj et al~\cite{Nataraj2007} (taken
in the same order). In addition to those, we consider two further
challenging examples:
 \begin{itemize}
 \item The 3-dimensional Motzkin example (ID=17) : $$p(x_1,x_2,x_3)={x_1}^4{x_2}^2+{x_1}^2{x_2}^4-3{x_1}^2{x_2}^2{x_3}^2+{x_3}^6, \;  R=[-0.5, 0.5]^3\,.$$
\item  The 4-dimensional algebraic example (ID=18): $$p(x_1,x_2,x_3,x_4)={x_1}^4+{x_2}^4+{x_3}^4+{x_4}^4-4x_1x_2x_3x_4-1, \; R=[-0.1, 0.1]^4\,.$$
  \end{itemize} 

  A termination test threshold $\epsilon=10^{-9}$ is fixed for
  computing the global minimum for the first $16$ benchmarks. For the
  Motzkin example ID 17, we fix $\epsilon = 10^{-5}$ and
  $\epsilon = 10^{-3}$ for example ID 18 to deal with numerical
  issues in using the MATLAB's LP solver. We expect commercial LP 
  solvers such as CPLEX to provide us with more robustness.

  Table~\ref{table2} shows the results obtained for the various
  benchmarks using the LP relaxations labeled \textsf{0}, \textsf{1}
  and \textsf{2}, respectively in column Ineq. These correspond to the
  LPs in~\eqref{eq:lpbern0}, ~\eqref{eq:lpbern1}
  and ~\eqref{eq:lpbern1} while ~\eqref{eq:lpbern0} is computed exactly 
  (since it is only given by the smallest Bernstein coefficients) whereas only lower
  bounds are computed for ~\eqref{eq:lpbern1} and ~\eqref{eq:lpbern3}
  using the results of the previous section. 
  For completeness, we also report, separately, the results over 
  the subproblems generated by the monotonicity tests.

  \paragraph{Comparing number of subdivisions:} Did the use of a
  larger LP at each step yield fewer cells? From
  Table~\ref{table2}, we observe that indeed the use of a larger LP
  formulation with more inequalities did lead to roughly a $10\%$
  reduction in the number of cells examined, especially for the
  larger instances. 

  \paragraph{Comparing total time:} Despite the reduction in the
  number of cells, the overall computation time for LP relaxation
  \textsf{2} was slightly larger. This is clearly due to the cost 
  of the iterative approach (since some LPs need to be performed). However, for relaxation \textsf{1}, the lower bound
  given by the First-LP avoid us solving LPs, which turns out to be
  advantageous. Indeed, the advantage vanishes as soon as we 
  use an LP solver for relaxation \textsf{1}, as demonstrated by 
  a single example in Table~\ref{table3}.

  \paragraph{Accuracy of Results:} Because of the adaptive nature of
  our branch-and-bound scheme, we obtain solutions that are
  consistently close to the actual global optima.

 
\begin{table}[htdp]
\centering
 \caption{Performance of the Cuts for benchmark problems taken from Nataraj et al~\cite{Nataraj2007}+ two more examples.
    \textbf{Legend:} \textsf{ID}: problem  ID as given in~\cite{Nataraj2007} + two more examples, \textsf{Ineq.}: the LP used for lower bounding \textsf{0}: LP~\eqref{eq:lpbern0}, \textsf{1}:lower bound on~\eqref{eq:lpbern1}, \textsf{2}:LP~\eqref{eq:lpbern3}, 
\textsf{Sub.} the number of subdivisions, \textsf{Time}: time taken in seconds,
\textsf{Cutoff:} number of boxes removed using the cut-off test, \textsf{Mono}: number of boxed removed using the monotonicity test, \textsf{Opt}: the  optimal value, \textsf{Sub*, Cutoff*, Time*}: Total number of subdivisions, number cutoff and time spent solving recursive subproblems.}
 \label{table2}
\begin{tabular}{|l|l|l|l|l|l|l|l|l|l|}
\hline  ID & Ineq. & Sub & Time      & Cutoff & Mono & Sub* & Cutoff* & Time*    & Opt  \\ \hline
1          & 0      & 164      & 1.2  & 55       & 62      & 5          & 7               & 0.02 & 0       \\         
1          & 1      & 155      & 1.1  & 67       & 46      & 5          & 7               & 0.02 & 0    \\            
1          & 2      & 147      & 2.5  & 47       & 61      & 5          & 7               & 0.02 & 0                \\  \hline
2          & 0      & 100      & 1.1  & 14       & 61      & 2          & 6               & 0.01 & -1.032   \\   
2          & 1      & 97       & 1.1  & 15       & 59      & 2          & 6               & 0.01 & -1.032      \\ 
2          & 2      & 97       & 3.0  & 13       & 61      & 2          & 6               & 0.01 & -1.032        \\   \hline
3          & 0      & 194      & 0.4  & 80       & 11      & 3          & 2               & 0.00 & 0                \\  
3          & 1      & 176      & 0.4  & 85       & 6       & 3          & 2               & 0.00 & 0                \\  
3          & 2      & 173      & 0.8  & 78       & 11      & 3          & 2               & 0.00 & 0                \\  \hline
4          & 0      & 1319     & 4.8  & 751      & 4       & 9          & 3               & 0.02 & 0                \\
4          & 1      & 1199     & 4.5  & 684      & 4       & 9          & 3               & 0.02 & 0                \\  
4          & 2      & 1098     & 23.2 & 625      & 4       & 9          & 3               & 0.03 & 0                \\  \hline
5          & 0      & 388      & 3  & 126      & 146     & 20         & 22              & 0.07  & -7               \\ 
5          & 1      & 371      & 2.9   & 134      & 133     & 17         & 19              & 0.06 & -7               \\   
5          & 2      & 371      & 3.4  & 122      & 145     & 17         & 19              & 0.07 & -7               \\  \hline
6          & 0      & 784      & 19.2 & 371      & 266     & 36         & 33              & 0.50  & 0                \\ 
6          & 1      & 763      & 18.8 & 371      & 254     & 34         & 31              & 0.47 & 0                \\  
6          & 2      & 707      & 33.1 & 318      & 263     & 32         & 29              & 0.66    & 0                \\ \hline
7          & 0      & 0        & 0.1  & 0        & 0       & 0          & 0               & 0        & -36.713       \\ 
7          & 1      & 0        & 0.1  & 0        & 0       & 0          & 0               & 0        & -36.713       \\  
7          & 2      & 0        & 0.1  & 0        & 0       & 0          & 0               & 0        & -36.713       \\  \hline
8          & 0      & 637      & 15.6 & 307      & 190     & 43         & 40              & 0.58 & 0                \\  
8          & 1      & 615      & 15.2 & 300      & 181     & 40         & 37              & 0.54 & 0                \\  
8          & 2      & 580      & 27.3  & 267      & 185     & 40         & 37              & 0.61 & 0                \\  \hline
9          & 0      & 3        & 0.1  & 0        & 2       & 503        & 307             & 4.25 & -3.18        \\  
9          & 1      & 3        & 0.1  & 0        & 2       & 498        & 304             & 4.25 & -3.18        \\ 
9          & 2      & 3        & 0.1  & 0        & 2       & 498        & 304             & 4.88 & -3.18        \\  \hline
10         & 0      & 1        & 0.1  & 0        & 0       & 0          & 0               & 0        & -20.8            \\  
10         & 1      & 1        & 0.1  & 0        & 0       & 0          & 0               & 0        & -20.8            \\  
10         & 2      & 1        & 0.1   & 0        & 0       & 0          & 0               & 0        & -20.8            \\  \hline
11         & 0      & 1794     & 51.5 & 1165     & 464     & 67         & 73              & 0.69 & -16              \\  
11         & 1      & 1542     & 44.4 & 1050     & 371     & 66         & 72              & 0.69 & -16              \\  
11         & 2      & 1525     & 51.6 & 989      & 415     & 66         & 72              & 0.80  & -16              \\  \hline
12         & 0      & 18       & 0.1  & 0        & 1       & 0          & 0               & 0.00 & -30.25           \\  
12         & 1      & 18       & 0.1  & 0        & 1       & 0          & 0               & 0.00 & -30.25           \\  
12         & 2      & 18       & 0.1  & 0        & 1       & 0          & 0               & 0.00 & -30.25           \\  \hline
13         & 0      & 101      & 0.1  & 6        & 0       & 0          & 0               & 0        & -0.25            \\ 
13         & 1      & 101      & 0.1  & 6        & 0       & 0          & 0               & 0        & -0.25            \\  
13         & 2      & 101      & 0.1  & 6        & 0       & 0          & 0               & 0        & -0.25            \\  \hline
14         & 0      & 0        & 0.1  & 0        & 0       & 0          & 0               & 0        & -1.44        \\  
14         & 1      & 0        & 0.1  & 0        & 0       & 0          & 0               & 0        & -1.44        \\ 
14         & 2      & 0        & 0.1  & 0        & 0       & 0          & 0               & 0        & -1.44        \\ \hline
15         & 0      & 118      & 0.1  & 7        & 0       & 0          & 0               & 0        & -0.25            \\  
15         & 1      & 118      & 0.1  & 7        & 0       & 0          & 0               & 0        & -0.25            \\  
15         & 2      & 118      & 0.1  & 7        & 0       & 0          & 0               & 0        & -0.25            \\ \hline
16         & 0      & 18       & 0.4  & 3        & 0       & 0          & 0               & 0        & -1.74        \\ 
16         & 1      & 18       & 0.4  & 3        & 0       & 0          & 0               & 0        & -1.74        \\ 
16         & 2      & 18       & 0.4  & 3        & 0       & 0          & 0               & 0        & -1.74     \\  \hline
17         & 0      & 17874    & 1161 & 5860     & 452     & 600        & 404             & 22.92 & 0                \\
17         & 1      & 16775    & 1081 & 5772     & 290     & 600        & 404             & 23.23 & 0                \\
17         & 2      & 16641    & 1431 & 5626     & 452     & 600        & 404             & 24.22  & 0                \\ \hline
18         & 0      & 12684    & 3636  & 4080     & 2422    & 2616       & 2240            & 330   & -1         \\
18         & 1      & 12033    & 3424 & 4949     & 1447    & 2480       & 2120            & 314  & -1         \\
18         & 2      & 11983    & 3967 & 3941     & 2414    & 2416       & 2062            & 333  & -1         \\ \hline

\end{tabular}
\end{table}

\begin{table}[htdp]
\centering
 \caption{Comparison between 'First-LP' and Linprog performances}
 \label{table3}
\begin{tabular}{|l|l|l|l|l|l|l|l|l|l|l|}
\hline  ID & Cut &LP& Sub & Time      & Cutoff & Mono & Sub* & Cutoff* & Time*     & Opt  \\ \hline
1          & 1     &`First-LP' & 155      & 1.11  & 67       & 46      & 5          & 7               & 0.02 & 0       \\         
1          & 1     & Linprog & 140     & 3.57   & 62      & 42        &5  & 7               & 0.07 & 0    \\     \hline       
\end{tabular}
\end{table}

 \subsubsection{Lyapunov Stability Proofs}

 A standard approach to prove stability for polynomial dynamical
 systems is to find a polynomial Lyapunov certificate which consists
 on a positive definite function decreasing along the trajectories
 inside a region of interest.  More precisely, let $V$ be a polynomial
 candidate Lyapunov function, $\dot{V}$ its derivative and $R$ the
 region of interest taken as a rectangle containing zero (the
 equilibrium point).  To verify the asymptotic stability of the
 equilibrium, we should verify that:
 $\displaystyle{\min_{x\in R} V(x)} \ge 0$ and
 $\displaystyle{\min_{x\in R} -\dot{V}(x)} \ge 0. $

 The advantage while solving POPs arising from Lyapunov function
 synthesis problems is that a global minimum is known in advance. In
 fact since usually $V(0)=\dot{V}(0)=0$, then we already know that
 zero is the global minimum of a true Lyapunov function. Therefore, 
 a good branch-and-bound decomposition scheme for this problem decomposes 
 around the equilibrium to maximize the opportunity for exact relaxations~\cite{Bensassi+Sriram}.


 To show the efficiency of the zero decomposition, we consider $9$
 Benchmarks given in our earlier work~\cite{Bensassi+Sriram}, taken in
 order.  The goal is to verify that the candidate functions are indeed
 Lyapunov functions.  In all these examples, the region of interest is
 $R=[-1,1]^n$.  We propose to check the validity of these results by
 computing ${p_V}^*$ and ${p_{\dot{V}}}^*$ which are lower bounds on
 $V$ and $-\dot{V}$ inside $R$ using the smallest Bernstein
 coefficient (${p_V}^*(0)$, ${p_{\dot{V}}}^*(0)$) and relaxation
 (\ref{eq:lpbern1}) (${p_V}^*(1)$,${p_{\dot{V}}}^*(1)$).  We report in
 Table~\ref{table4} the results we obtained where stability is said
 verified once a precision of $10^{-9}$ is reached.  In the appendix
 we give a detailed description of the Benchmarks, the Lyapunov
 function and their associated Lie derivatives.

\begin{table}[htdp]
\begin{center}\caption{Proving bounds on Lyapunov functions and their derivatives. Legend: \textsf{EX} - ID of the example taken from Ben Sassi et al.~\cite{Bensassi+Sriram}, \textsf{$p_V^*(j)$}: Lower bounds to optimal value obtained by using LP relaxation id $j$, \textsf{$p_{Vdot}^*(j)$}: Lower bounds on optimal value of Lyapunov derivative.}\label{table4}
\begin{tabular}{|c|c|c|c|c|c|}\hline EX &  ${p_V}^*(0)$ &  ${p_V}^*(1)$ & $p_{Vdot}^*(0)$  & $p_{Vdot}^*(1)$  & Stability \\\hline 1 & $-9.2\times {10}^{-12}$& 0 & $-5.4\times {10}^{-12}$ & $-3.5\times {10}^{-15}$ & \tick \\\hline 2 & -1 & -0.0625 & $-6.2\times {10}^{-12}$ & 0 & \cross \\\hline 3 & $-5.4\times {10}^{-10}$ & 0 & $-2.9\times {10}^{-10}$ & 0 & \tick \\\hline 4 & $-2.7\times {10}^{-9}$  & 0 & $-8.7\times {10}^{-10}$  & $-1\times {10}^{-14}$  & \tick \\ \hline 5 & $-1.3\times {10}^{-10}$ & 0 & $-3.7\times {10}^{-11}$ & $-1.4\times {10}^{-14}$ & \tick \\\hline 6 &  $-3.4\times {10}^{-12}$ &  $-3.5\times {10}^{-15}$ &  $-6.9\times {10}^{-13}$ &  $-4.1\times {10}^{-13}$ & \tick \\
\hline 7 & $-1.5\times {10}^{-10}$ & $-2.1\times {10}^{-14}$ & $-9.5\times {10}^{-11}$ & $-3.8\times {10}^{-11}$ & \tick \\\hline 8 & -10.9788 & -10.9788 &  $-7.9\times {10}^{-7}$ &  $-3.3\times {10}^{-9}$ & \cross \\
\hline 9 & $-9.9\times {10}^{-10}$ & $-1.7\times {10}^{-15}$ & $-3.8\times {10}^{-11}$ & $-3.3\times {10}^{-11}$ & \tick \\\hline  \end{tabular} 
\end{center}

\end{table}

\section{Conclusions}
We present a novel approach to deal with polynomial optimization problems (POPs) by relaxing them to bigger size linear programs.
The key idea is to use Bernstein polynomials in order to build LPs that can handle many of the relations between non linear terms missed 
because of the linearization process. Contrarily to the standard RLT approach, the given LPs are easily implementable 
since only a Bernstein framework is needed (coefficients, bounds and change of variable). Thanks to the properties of Bernstein polynomials ,
tighter bounds than RLT are obtained and various techniques to improve the precision of these bounds are given. We show that our 
relaxations can be used to improve the Branch and Bound scheme given by Nataraj~\cite{Nataraj2007}. The main drawback faced in the latter case was the 
extra cost of solving LPs. We already find a way to avoid this for our first linear relaxation but not for the more precise one.
This is definitely a first goal future work. Also, we manage to extend our Brand and Bound algorithms in the case of semi algebraic constraints.

\bibliographystyle{spmpsci}
\bibliography{biblyap}

\begin{thebibliography}{10}
\providecommand{\url}[1]{{#1}}
\providecommand{\urlprefix}{URL }
\expandafter\ifx\csname urlstyle\endcsname\relax
  \providecommand{\doi}[1]{DOI~\discretionary{}{}{}#1}\else
  \providecommand{\doi}{DOI~\discretionary{}{}{}\begingroup
  \urlstyle{rm}\Url}\fi

\bibitem{AliAhmadi+Majumdar/2014/DSOS}
Ahmadi, A., Majumdar, A.: {DSOS} and {SDSOS} optimization: {LP} and
  {SOCP}-based alternatives to sum of squares optimization.
\newblock In: Information Sciences and Systems (CISS), 2014 48th Annual
  Conference on, pp. 1--5 (2014)

\bibitem{Bernstein1}
Bernstein, S.: Collected Works, vol.~1.
\newblock USSR Academy of Sciences (1952)

\bibitem{Bernstein2}
Bernstein, S.: Collected Works, vol.~2.
\newblock USSR Academy of Sciences (1954)

\bibitem{Bernstein/1912/Demonstration}
Bernstein, S.N.: D{\'e}monstration du th{\'e}or{\'e}me de {W}eierstrass
  fond{\'e}e sur le calcul des probabilit{\'e}s.
\newblock Communcations de la Soci{\'e}t{\'e} Math{\'e}matique de Kharkov\ 2
  (1), 1--2 (1912)

\bibitem{Farouki/2012/Bernstein}
Farouki, R.T.: The {B}ernstein polynomial basis: A centennial retrospective.
\newblock Comput. Aided Geom. Des. \textbf{29}(6), 379--419 (2012)

\bibitem{Garloff93}
Garloff, J.: The {B}ernstein algorithm.
\newblock Reliable Computing \textbf{2}, 154--168 (1993)

\bibitem{Lasserre01globaloptimization}
Lasserre, J.B.: Global optimization with polynomials and the problem of
  moments.
\newblock SIAM Journal on Optimization \textbf{11}, 796--817 (2001)

\bibitem{LasserreSDPvsLP}
Lasserre, J.B.: Semidefinite progrmming vs lp relaxations for polynomial
  programming.
\newblock Mathematics of operations research \textbf{27}(2), 347--360 (2002)

\bibitem{bern3}
Lin, Q., J.G.Rokne: Interval approxiamtions of higher order to the ranges of
  functions.
\newblock Computers Math \textbf{31}, 101--109 (1996)

\bibitem{Moore+Others/2009/Interval}
Moore, R., Kearfott, R.B., Cloud, M.: Introduction to Interval Analysis.
\newblock SIAM (2009)

\bibitem{Munoz+Narkavicz/2013/Formalization}
Mu{\~{n}}oz, C., Narkawicz, A.: Formalization of a representation of
  {B}ernstein polynomials and applications to global optimization.
\newblock Journal of Automated Reasoning \textbf{51}(2), 151--196 (2013).
\newblock \doi{10.1007/s10817-012-9256-3}.
\newblock \urlprefix\url{http://dx.doi.org/10.1007/s10817-012-9256-3}

\bibitem{Nataraj2007}
Nataraj, P.S.V., Arounassalame, M.: A new subdivision algorithm for the
  bernstein polynomial approach to global optimization.
\newblock International Journal of Automation and Computing \textbf{4},
  342--352 (2007)

\bibitem{ParilloSOS}
Parillo, P.: Semidefinite programming relaxations for semialgebraic problems.
\newblock Mathematical Programming \textbf{96}, 293--320 (2003)

\bibitem{Bensassi+Automatica}
Sassi, M.B., Girard, A.: Computation of polytopic invariants for polynomial
  dynamical systems using linear programming.
\newblock Automatica \textbf{48}(12), 3114 -- 3121 (2012)

\bibitem{Bensassi+Sriram}
Sassi, M.B., Sankaranarayanan, S., Chen, X., Abraham, E.: Linear relaxations of
  polynomial positivity for polynomial lyapunov function synthesis.
\newblock IMA Journal of Mathematical Control and Information pp. 1 -- 34
  (2015)

\bibitem{sherali91}
Sherali, H., Tuncbilek, C.: A global optimization algorithm for polynomial
  programming using a reformulation-linearization technique.
\newblock Journal of Global Optimization \textbf{2}, 101--112 (1991)

\bibitem{sherali97}
Sherali, H., Tuncbilek, C.: New reformulation-linearization/convexification
  relaxations for univariate and multivariate polynomial programming problems.
\newblock Operation Research Letters \textbf{21}, 1--9 (1997)

\bibitem{Shor/1987/Class}
Shor, N.: Class of global minimum bounds on polynomial functions.
\newblock Cybernetics \textbf{23}(6), 731--734 (1987).
\newblock Originally in Russian: Kibernetika (6), 1987, 9--11

\end{thebibliography}

\section{Appendix}

\paragraph{Benchmark \#1:} Consider the two variable polynomial ODE:
\begin{align*}
\frac{dx}{dt} &=-12.5x+2.5x^2+2.5y^2+10x^2y+2.5y^3.\\
\frac{dy}{dt} & =-y-y^2.\\
\end{align*}

\begin{lstlisting}

Lyapunov function :
2x^2 5y^2  
      
Lyapunov derivative function :
40x^3y+10x^3-50x^2+10xy^3+10xy^2-10y^3-10y^2  
\end{lstlisting}
\paragraph{Benchmark \#2:} Consider the two variable polynomial ODE:
\begin{align*}
\frac{dx}{dt} &= 6.933333x^3+4.566667x^2-21.5x.\\
\frac{dy}{dt} & = 6.933333x^3+0.4x^2y+2.066667x^2+xy^2+0.6xy-9x-y^2-y.\\
\end{align*}

\begin{lstlisting}
Lyapunov function :
5x^2-4xy+5y^2  

Lyapunov derivative function :
41.6x^4+40x^3y+37.4000x^3 -179x^2+10xy^3+10xy^2 -10y^3 -10y^2.  
 

\end{lstlisting}
\paragraph{Benchmark \#3:} Consider the two variable polynomial ODE:
\begin{align*}
\frac{dx}{dt} &=-1.5x-x^2+0.5xy+0.5y^2-2x^3+x^2y.\\
\frac{dy}{dt} & =-0.5y.\\ 
\end{align*}

\begin{lstlisting}
Lyapunov function :
5x^2+5y^2.  
    
Lyapunov derivative function :
-20x^4+10x^3y-10x^3+5x^2y-15x^2+5xy^2-5y^2.  


  \end{lstlisting}
\paragraph{Benchmark \#4:} Consider the two variable polynomial ODE:
\begin{align*}
\frac{dx}{dt} &=-2x^3-0.5xy-0.5x.\\
\frac{dy}{dt} & =0.25xy^2-0.125xy+0.25y^2-0.4125y.\\ 
\end{align*}

\begin{lstlisting}
Lyapunov function :
5x^2+5y^2.
    
Lyapunov derivative function :
-20x^4 -5x^2y -5x^2+2.5xy^3-1.25xy^2+2.5y^3-4.125y^2.  
  


  \end{lstlisting}
 \paragraph{Benchmark \#5:} Consider the three variable polynomial ODE:
\begin{align*}
\frac{dx}{dt} &=-2x^3-0.5xy-0.5x-z^3-z^2.\\
\frac{dy}{dt} & =0.25xy^2-0.125xy+0.25y^2-0.4125y.\\ 
\frac{dz}{dt} & =-z^2-z.\\
\end{align*}

\begin{lstlisting}
Lyapunov function :
5x^2+5y^2+5z^2.  

Lyapunov derivative function :
-20x^4-5x^2y-5x^2+2.5xy^3-1.25xy^2-10xz^3-10xz^2+2.5y^3-4.125y^2-10z^3-10z^2. 
 
  \end{lstlisting}
  
 \paragraph{Benchmark \#6:} Consider the three variable polynomial ODE:
\begin{align*}
\frac{dx}{dt} &=-0.5x^3y+0.5x^3z^2-3x^3+y^5-y^4+yz^4-z^4.\\
\frac{dy}{dt} & =0.25y^2-0.25y.\\ 
\frac{dz}{dt} & =yz^4+z^4-2z^3.\\
\end{align*}

\begin{lstlisting}
Lyapunov function :
1.9150x^4+5x^3+5x^2y^2+5x^2z^2+3.9396x^2-2.5409xy^3+2.5409xy^2       
+5y^4+5y^3+5y^2z^2+5y^2+5z^4+5z^3+5z^2 .

Lyapunov derivative function : 
-3.8300x^6y+3.8300x^6z^2-22.98x^6-7.5x^5y+7.5x^5z^2-45x^5-5x^4y^3+5x^4y^2z^2  
-30x^4y^2-5x^4yz^2-3.9396x^4y+5x^4z^4-26.0604x^4z^2-23.6375x^4+7.66x^3y^5       
-6.3895x^3y^4-1.2705x^3y^3z^2+6.3524x^3y^3+1.2705x^3y^2z^2-7.6228x^3y^2       
+7.66x^3yz^4-7.66x^3z^4+15x^2y^5-15x^2y^4+2.5x^2y^3-2.5x^2y^2+10x^2yz^5  
+15x^2yz^4+10x^2z^5-35x^2z^4+10xy^7-10xy^6+10xy^5z^2+7.8792xy^5-10xy^4z^2  
-9.7849xy^4+10xy^3z^4+3.1762xy^3-10xy^2z^4-1.2705xy^2+10xyz^6+7.8792xyz^4  
-10xz^6-7.8792xz^4-2.5409y^8+5.0819y^7-2.5409y^6+5y^5-2.5409y^4z^4-1.25y^4       
+10y^3z^5+5.0819y^3z^4+2.5y^3z^2-1.25y^3+10y^2z^5-22.5409y^2z^4-2.5y^2z^2  
-2.5y^2+20yz^7+15yz^6+10yz^5+20z^7-25z^6-20z^5-20z^4.  
  \end{lstlisting}
  
  \paragraph{Benchmark \#7:} Consider the three variable polynomial ODE:
\begin{align*}
\frac{dx}{dt} &=-0.5x^3y+0.5x^3z^2-x^3+y^4z+y^4-yz^3+yz^2+z^3-z^2\\
\frac{dy}{dt} & = 0.5y^2z-0.5y^2-2y\\ 
\frac{dz}{dt} & =-yz^2+yz+z^2-z\\
\end{align*}
  \begin{lstlisting}
 Lyapunov function :
-1.2500x^4+1.6667x^3+5x^2y^2+5x^2z^2+5x^2+5xy^3+5xy^2z+5xy^2+1.0921xz^3-
1.0921xz^2+5y^4+5y^3z+5y^3+5y^2z^2+5y^2z+5y^2+0.4638yz^3-0.4638yz^2
+5z^4+5z^3+5z^2.  

 Lyapunov derivative function : 
 2.5x^6y-2.5x^6z^2+5x^6-2.5x^5y+2.5x^5z^2-5x^5-5x^4y^3+5x^4y^2z^2-10x^4y^2       
-5x^4yz^2-5x^4y+5x^4z^4-5x^4z^2-10x^4-5x^3y^4z-7.5x^3y^4+2.5x^3y^3z^2-2.5x^3y^3z    
-7.5x^3y^3+2.5x^3y^2z^3+2.5x^3y^2z^2-5x^3y^2z-5x^3y^2+4.454x^3yz^3-4.454x^3yz^2  
+0.546x^3z^5-0.546x^3z^4-6.0921x^3z^3+6.0921x^3z^2+5x^2y^4z+5x^2y^4+5x^2y^3z    
-5x^2y^3-20x^2y^2-15x^2yz^3+15x^2yz^2+15x^2z^3-15x^2z^2+10xy^6z+10xy^6       
+10xy^4z^3+10xy^4z^2+17.5xy^4z+2.5xy^4-10xy^3z^3+10xy^3z^2+5xy^3z-35xy^3       
+10xy^2z^3-5xy^2z^2-25xy^2z-20xy^2-10xyz^5+6.7237xyz^4-4.5395xyz^3  
+7.8158xyz^2+10xz^5-6.7237xz^4+4.5395xz^3-7.8158xz^2+5y^7z+5y^7+5y^6z^2  
+10y^6z+5y^6+10y^5z-10y^5+1.0921y^4z^4-5y^4z^3+6.4079y^4z^2+5y^4z    
-47.5y^4-5y^3z^4+10y^3z^2-30.0000y^3z-35y^3+3.8406y^2z^4+11.8551y^2z^3  
-30.6957y^2z^2-25y^2z-20y^2-1.0921yz^6-17.8158yz^5+5.2992yz^4+1.7535yz^3  
+11.8551yz^2+1.0921z^6+17.8158z^5-3.9079z^4-5z^3-10z^2.  

   \end{lstlisting}
   
  \paragraph{Benchmark \#8:} Consider the three variable polynomial ODE:
\begin{align*}
\frac{dx}{dt} &=-0.5x^3y+0.5x^3z^2-x^3+y^4z+y^4-yz^3+3yz^2+z^3-3z^2\\
\frac{dy}{dt} & =y^4z-y^4-2y^3-z^3+3z^2\\ 
\frac{dz}{dt} & =z^2-3z\\
\end{align*}
 
 \begin{lstlisting}
Lyapunov function :
2.3519x^5-1.0449x^4y+0.3429x^4z+5x^4+4.4496x^3y^2-2.5x^3y+5x^3z^2+5x^3+5x^2y^3+5x^2y^2       
+5x^2yz^2+5x^2z^3+5x^2z^2+5x^2+3.6461xy^4+5xy^3+5xy^2z^2+5xyz^3+5xyz^2+5xz^4+5xz^3  
+2.5863xz^2-0.4325y^5+4.9487y^4z+5y^4+5y^3z^2-4.0594y^3+5y^2z^3+5y^2z^2+5y^2+5yz^4  
+5yz^3+4.5809yz^2+1.8568z^5+5z^4-0.7475z^3+5z^2. 
 Lyapunov derivative function : 
-5.8798x^7y+5.8798x^7z^2 -11.7597x^7+2.0899x^6y^2-2.0899x^6yz^2-0.6857x^6yz-5.8203x^6y
+0.6857x^6z^3+10x^6z^2 -1.3715x^6z-20x^6-6.6743x^5y^3+6.6743x^5y^2z^2-9.5986x^5y^2
-11.25x^5yz^2+7.5x^5z^4-7.5x^5z^2-15x^5+10.7148x^4y^4z+7.8046x^4y^4+5x^4y^3z^2
-12.9101x^4y^3-10x^4y^2+5x^4yz^4-16.7597x^4yz^3+20.279x^4yz^2  
-5x^4y+5x^4z^5+5x^4z^4+2.8046x^4z^3-43.071x^4z^2-1.0286x^4z-10x^4+4.7194x^3y^5z
-14.9019x^3y^5+3.1948x^3y^4z^2+18.8712x^3y^4z-1.4443x^3y^4+2.5x^3y^2z^4
+1.6797x^3y^2z^3-20.0392x^3y^2z^2+2.5x^3yz^5-1.3715x^3yz^4-36.4644x^3yz^3
+92.9436x^3yz^2+2.5x^3z^6+2.5x^3z^5-2.3357x^3z^4+23.3865x^3z^3-100.0863x^3z^2
+28.3487x^2y^6z-1.6513x^2y^6+2.5008x^2y^5z-47.5x^2y^5+20x^2y^4z^3+10x^2y^4z^2
+14.9998x^2y^4z-5x^2y^4-13.3487x^2y^3z^3  +30.046x^2y^3z^2+5.8514x^2y^2z^3
-17.5460x^2y^2z^2-15x^2yz^5+45x^2yz^4-22.5024x^2yz^3+67.5x^2yz^2+10x^2z^5  
-15x^2z^4-20x^2z^3-75x^2z^2+24.5844xy^7z-4.5844xy^7+25xy^6z-34.1687xy^6+20xy^5z^3
-30xy^5+15xy^4z^4+10xy^4z^3+15xy^4z^2+10xy^4z+10xy^4-24.5840xy^3z^3+33.7529xy^3z^2
-10xy^2z^5+30xy^2z^4+5xy^2z^3-15xy^2z^2-10xyz^6+20xyz^5+45xyz^4-45xyz^3+5xz^6+10xz^5
-60xz^4-29.8273xz^3-45.5180xz^2+1.4833y^8z+5.8088y^8+19.7947y^7z^2+5.205y^7z    
-10.6745y^7+20y^6z^3-10y^6z^2-51.7682y^6z-27.8218y^6+15y^5z^4+6.3539y^5z^3
-24.0617y^5z^2+10y^5z+14.3566y^5+10y^4z^5+10y^4z^4-12.0244y^4z^3-19.4723y^4z^2
-14.8461y^4z-20y^4-5y^3z^5-14.795y^3z^4+44.3852y^3z^3+5.8381y^3z^2-5y^2z^6
+60y^2z^4-22.8216y^2z^3-66.5347y^2z^2-5yz^7+5yz^6+42.4137yz^5-22.2410yz^4
-45.8383yz^3+2.5148yz^2+9.2838z^6-9.8460z^5-56.2589z^4+16.7274z^3-30z^2.  
 
   \end{lstlisting}

 \paragraph{Benchmark \#9:} Consider the three variable polynomial ODE:
\begin{align*}
\frac{dx}{dt} &=  0.05x^2yz+0.05x^2y-0.05x^2z-0.05x^2+0.05xyz+0.05xy-0.05xz-0.05x+0.125y^3z-0.125y^3\\
&+0.125y^2z-0.125y^2+0.2yz^5+0.2yz^4-0.2z^5-0.2z^4; \\
\frac{dy}{dt} & = 0.125y^2z-0.125y^2+0.125yz-0.125y+0.2z^5+0.2z^4\\ 
\frac{dz}{dt} & =-0.1z^2-0.1z\\
\end{align*}
 
  \begin{lstlisting}
Lyapunov function :
2.5x^2+2.5y^2+5z^2  

Lyapunov derivative function :   
0.25x^3yz+0.25x^3y-0.25x^3z-0.25x^3+0.25x^2yz+0.25x^2y-0.25x^2z-0.25x^2+0.625xy^3z
-0.6250xy^3+0.6250xy^2z-0.6250xy^2+xyz^5+xyz^4-xz^5-x+z^4+0.625y^3z-0.625y^3+0.625y^2z
-0.625y^2+yz^5+yz^4-z^3-z^2.  
   
    \end{lstlisting}

\end{document}